\documentclass[a4paper,
               twoside,
               ]{amsart}

\usepackage{amsmath} 

\usepackage{amssymb} 
\usepackage[all]{xy} 
\usepackage{color} 
\usepackage[pdfstartview=FitH]{hyperref} 

\newtheorem{theorem}{Theorem}[section]

\newtheorem{lemma}[theorem]{Lemma}

\newtheorem{proposition}[theorem]{Proposition}
\theoremstyle{definition}
\newtheorem{definition}[theorem]{Definition}
\theoremstyle{remark}
\newtheorem{remark}[theorem]{Remark}

\numberwithin{equation}{section}


\newcommand*{\ab}{\mathrm{ab}} 
\newcommand*{\et}{\mathrm{\acute{e}t}} 
\newcommand*{\cyc}{\mathrm{cyc}} 

\newcommand*{\FF}{\mathbb{F}} 
\newcommand*{\Z}{\mathbb{Z}} 
\newcommand*{\Q}{\mathbb{Q}} 
\newcommand*{\mto}{\rightarrow} 
\newcommand*{\fundG}{\pi_1^{\et}} 
\newcommand*{\Sect}{\Gamma} 
\newcommand{\Sectc}{\Gamma_{\mathrm{c}}} 
\newcommand*{\tensor}{\otimes} 
\newcommand*{\ctensor}{\hat{\tensor}} 
\newcommand*{\isomorph}{\cong} 
\newcommand*{\del}{\partial} 
\newcommand*{\Order}{\mathcal{O}} 
\newcommand*{\comp}{\circ} 
\DeclareMathOperator{\Aut}{Aut} 
\DeclareMathOperator{\Gal}{Gal} 
\DeclareMathOperator{\Hom}{Hom} 
\DeclareMathOperator{\RDer}{R} 
\DeclareMathOperator{\LDer}{L} 
\DeclareMathOperator{\Spec}{Spec} 
\DeclareMathOperator{\HF}{H} 
\DeclareMathOperator{\GL}{GL} 
\DeclareMathOperator{\Jac}{Jac} 

\newcommand*{\Sg}{\mathcal{S}} 
\newcommand*{\onsg}{\lhd_{\mathrm{o}}} 
\newcommand*{\sheaf}[1]{\mathcal{#1}} 
\newcommand*{\cmplx}[1]{{#1}^\bullet} 
\newcommand*{\cat}[1]{\mathbf{#1}} 
\newcommand*{\Ar}{\mathfrak{A}} 
\newcommand*{\Cr}{\mathfrak{C}} 
\newcommand*{\Sh}{\hat{\cat{Sh}}} 
\newcommand*{\Frob}{F} 
\newcommand*{\qiso}{\simeq} 
\newcommand*{\place}[1]{\mathfrak{p}} 
\newcommand*{\X}{\mathcal{X}} 
\DeclareMathOperator{\God}{G} 
\DeclareMathOperator{\Cone}{Cone} 
\DeclareMathOperator{\id}{id} 



\begin{document}

\title{Noncommutative Main Conjectures of Geometric Iwasawa Theory}
\author{Malte Witte}
\address{Malte Witte,\newline Universit\"at Heidelberg,\newline Mathematisches Institut,\newline Im Neuenheimer Feld 288}
\email{witte@mathi.uni-heidelberg.de}
%
%

\begin{abstract}
 In this chapter we give a survey on noncommutative main conjectures of Iwasawa theory in a geometric setting, i.\,e.\ for separated schemes of finite type over a finite field, as stated and proved by Burns and the author. We will also comment briefly on versions of the main conjecture for function fields.
\end{abstract}


\maketitle

In this chapter we give a survey on noncommutative main conjectures of Iwasawa theory in a geometric setting, i.\,e.\ for a separated scheme of finite type over a finite field, as stated and proved in \cite{Witte:MCVarFF} and \cite{Burns:MCinGIwTh+RelConj}. We begin by formulating the conjecture in Section~\ref{sec:Formulation} and then give a sketch of the proof in Section~\ref{sec:Proofs}. In Section~\ref{sec:Function fields} we will comment on the special case that the scheme is smooth, geometrically connected, and of dimension $1$, which corresponds to an analogue of the main conjecture for function fields.

 \section{Formulation of the Conjecture}\label{sec:Formulation}

 As a motivation, let us begin by recalling the formulation of the main conjecture in the number field case from \cite[Thm. 5.1]{CoatesKim:Introduction}. Since the letter $p$ is conventionally reserved to denote the characteristic of the base fields appearing later in the text, we will denote by $\ell$ the prime which is conventionally denoted by $p$ in Iwasawa theory. Fix a totally real number field $F$ and an admissible $\ell$-adic Lie extension $F_{\infty}/F$ in the sense of \cite[\S 1]{CoatesKim:Introduction}. (In particular, $F_{\infty}$ is also totally real.) Let $\kappa_F$ denote the cyclotomic character.

 \begin{theorem}\label{thm:mc number field}
 Let $F_{\infty}/F$ be unramified outside the finite set of primes $\Sigma$ (with $\ell\in \Sigma$) and assume its Galois group $G$ contains no element of order $\ell$. If the generalized Iwasawa conjecture holds for $F_{\infty}/F$, then there exists a $\zeta_{F_{\infty}/F}\in K_1(\Lambda(G)_S)$ satisfying the interpolation property
 $$
 \zeta_{F_{\infty}/F}(\rho\kappa_F^n)=L_{\Sigma}(\rho,1-n)
 $$
 for all Artin representations $\rho$ of $G$ and the equation
 $$
 \del \zeta_{F_{\infty}/F}=[\X(F_{\infty})]-[\Z_{\ell}]
 $$
 in  $K_1(\Lambda(G),\Lambda(G)_S)$.
 \end{theorem}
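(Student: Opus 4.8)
The plan is to reduce the theorem to the classical abelian main conjecture for totally real fields, combined with an explicit, purely $K$-theoretic description of $K_1(\Lambda(G)_S)$; this is in essence the argument of Kakde and of Ritter--Weiss, carried out inside the framework of Burns, Kato and Fukaya--Kato. First I would repackage the module $\X(F_\infty)$ into a perfect complex $\cmplx{C}$ of $\Lambda(G)$-modules, namely (a suitable dual of) $\RDer\Sectc(\Spec\Order_{F,\Sigma},\Lambda(G))$ --- compactly-supported \'etale cohomology with coefficients in the Iwasawa algebra $\Lambda(G)$, taken with its tautological $G$-action --- or equivalently a complex of continuous cochains of the Galois group of the maximal $\Sigma$-ramified extension of $F$. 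Because $F_\infty$ is totally real, Poitou--Tate duality identifies the cohomology of $\cmplx{C}$, up to the contribution of the constant sheaf, with $\X(F_\infty)$ placed in a single degree; the superfluous copy of $\Z_\ell$ is exactly the term subtracted in the formula for $\del\zeta$. The generalized Iwasawa conjecture for $F_\infty/F$ is equivalent to the acyclicity of $\Lambda(G)_S\tensor_{\Lambda(G)}\cmplx{C}$, so that $\cmplx{C}$ determines a class in $K_1(\Lambda(G),\Lambda(G)_S)$ which by construction is $[\X(F_\infty)]-[\Z_\ell]$.

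Here the hypothesis that $G$ contains no element of order $\ell$ enters: it makes $\Lambda(G)$ of finite global dimension, so that $\X(F_\infty)$ and $\Z_\ell$ indeed possess classes in $K_0(\Lambda(G))$ and the expression $[\X(F_\infty)]-[\Z_\ell]$ is meaningful. A standard computation with the complex $\cmplx{C}$ shows that its image under the connecting map $K_1(\Lambda(G),\Lambda(G)_S)\mto K_0(\Lambda(G))$ vanishes, so the localisation sequence
\[
K_1(\Lambda(G))\mto K_1(\Lambda(G)_S)\xrightarrow{\ \del\ }K_1(\Lambda(G),\Lambda(G)_S)\mto K_0(\Lambda(G))
\]
produces \emph{some} $\zeta$ with $\del\zeta=[\X(F_\infty)]-[\Z_\ell]$. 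The set of such $\zeta$ is a torsor under the image of $K_1(\Lambda(G))$; the substance of the theorem is to single out the one that in addition interpolates the $L$-values.

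To do this I would use the explicit description of $K_1(\Lambda(G)_S)$, and of $K_1(\Lambda(G))$, due to Kakde and to Ritter--Weiss --- the Iwasawa-algebra analogue of Kato's analysis of the false Tate curve case, resting on an integral logarithm of Oliver--Taylor type, and again available precisely because $G$ has no element of order $\ell$. It reduces the construction of $\zeta_{F_\infty/F}$ with the prescribed $\del$ to producing a family $(\zeta_U)_U$, indexed by the open subgroups $U$ of $G$, of elements $\zeta_U\in K_1(\Lambda(U^{\ab})_S)$ which are compatible under the norm maps and satisfy an explicit system of trace congruences modulo powers of $\ell$. For each $U$ the abelian main conjecture for the totally real field $F_\infty^U$ --- the interpolation results of Deligne--Ribet, Cassou-Nogu\`es and Barsky, together with Wiles' theorem --- supplies a candidate $\zeta_U$ with the correct interpolation property and with $\del\zeta_U=[\X(F_\infty^U)]-[\Z_\ell]$. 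The norm compatibilities then amount to the functoriality of Deligne--Ribet $L$-functions, whereas the congruences, after unwinding the logarithm, become congruences modulo powers of $\ell$ between special values of partial zeta functions of the fields $F_\infty^U$; these are proved by realising the relevant Hilbert--Eisenstein series as $q$-expansions and invoking the $q$-expansion principle. The glued element is the desired $\zeta_{F_\infty/F}$: its interpolation formula at an arbitrary Artin representation $\rho$ follows by Brauer induction, using the inductivity of Artin $L$-functions and the compatibility of the evaluation maps with the norm maps; and the identity $\del\zeta_{F_\infty/F}=[\X(F_\infty)]-[\Z_\ell]$ can be verified on the abelian sub-quotients, where it reduces to Wiles' theorem, since $K_1(\Lambda(G),\Lambda(G)_S)$ is detected on abelian quotients.

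\textbf{The main obstacle} is the congruence step: proving that the abelian $\ell$-adic $L$-functions satisfy exactly the system of congruences dictated by the $K$-theoretic description. This is Kakde's central technical result, and it is here that Hilbert modular forms enter in an essential way (in the tradition of Deligne--Ribet, Hida and Wiles); by comparison, the bookkeeping of the Euler factors at $\Sigma$ and the matching of the interpolation range with $\X(F_\infty)$, via Tate's global Euler characteristic formula and Poitou--Tate duality, is delicate but essentially routine. It is worth stressing that in the geometric situation treated in the body of this chapter this obstacle disappears: over a finite field the interpolation is furnished directly by the Grothendieck--Lefschetz trace formula, which is the principal simplification afforded by working in equal characteristic.
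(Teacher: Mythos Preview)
The paper does not prove this theorem. Theorem~\ref{thm:mc number field} is stated without proof, quoted from \cite[Thm.~5.1]{CoatesKim:Introduction} purely as motivation for the geometric analogue; the paper's own contributions are Theorems~\ref{thm:mythm} and~\ref{thm:Burnsthm}, whose proofs occupy Section~\ref{sec:Proofs}. So there is no ``paper's own proof'' against which to compare your proposal.

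That said, your outline is an accurate high-level account of the proof that actually exists in the literature (Kakde, and independently Ritter--Weiss): repackage $\X(F_\infty)$ into a perfect Selmer complex via Poitou--Tate duality, use the $\theta$-map / integral-logarithm description of $K_1(\Lambda(G)_S)$ to reduce to a compatible family of abelian $\ell$-adic $L$-functions on subquotients, supply those from Deligne--Ribet and Wiles, and verify the required congruences through the $q$-expansion principle for Hilbert modular forms. Your closing remark is exactly the point the paper makes implicitly: in the geometric case with $\ell\neq p$ the congruence step disappears, because $\zeta_!$ is constructed directly as the inverse of the class $[\id-\Frob]$ and the interpolation is the Grothendieck trace formula.

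One small correction: the $\theta$-map description of $K_1(\Lambda(G))$ and $K_1(\Lambda(G)_S)$ does \emph{not} require that $G$ have no element of order $\ell$; Kakde's machinery (and the version recalled in Section~\ref{sec:Proofs} via \cite{SchneiderVenjakob:Keins}) works for arbitrary compact $\ell$-adic Lie groups. The torsion-free hypothesis is used only so that $\X(F_\infty)$ and $\Z_\ell$ individually admit finite projective resolutions and hence have well-defined classes in the relative $K_0$, as you correctly note earlier; you should not invoke it a second time for the $K$-theoretic reduction.
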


 Our aim is to formulate an analogue of this theorem for a separated scheme $X$ of finite type over the field $\FF_q$ with $q$ elements. For this, we need to introduce for any prime $\ell$ an appropriate notion of admissible $\ell$-adic Lie extensions, a replacement for the $\Lambda(G)$-module $\X(F_{\infty})$, and the corresponding $L$-functions. This will be the content of the following paragraphs. To avoid technicalities, we will additionally assume that $X$ is geomtrically connected. However, we will not exclude the case $\ell=p$, where $p$ denotes the characteristic of $\FF_q$.

 \subsection{Admissible Lie extensions}

 In the situation of Theorem~\ref{thm:mc number field}, let $\Order_F$ denote the ring of integers of $F$ and $W$ the open subscheme of $\Spec \Order_F$ given by the open complement of $\Sigma$. The $\ell$-adic Lie group $G$ is a factor group of the Galois group $\Gal(F_{\Sigma}/F)$ of the maximal extension $F_{\Sigma}$ of $F$ unramified outside $\Sigma$ within a fixed algebraic closure $\bar{F}$ of $F$. The Galois group $\Gal(F_{\Sigma}/F)$ is in geometric terms the \emph{\'etale fundamental group} $\fundG(W,\xi)$ of the scheme $W$ with respect to the geometric point
 $$
 \xi\colon \Spec \bar{F}\mto W
 $$
 corresponding to the algebraic closure $\bar{F}$.

 The \'etale fundamental group $\fundG(X,\xi)$ with respect to a fixed geometric base point $\xi$ of $X$ is in fact defined for any connected scheme. The open normal subgroups $U\onsg\fundG(X,\xi)$ are in $1$-$1$-correspondence with the isomorphism classes of finite connected pointed Galois coverings
 $$
 f\colon (Y,\eta)\mto(X,\xi)
 $$
 with Galois group
 $$
 \Gal(Y/X)=\Aut_X(Y)=\fundG(X,\xi)/U
 $$
 \cite[Ch.~1,\S 5]{Milne:EtCohom}. In the following, we will allow ourselves to neglect the base points in our notation.

 We extend the above correspondence to closed normal subgroups $V$ of $\fundG(X,\xi)$ by writing
 $$
 f\colon Y\mto X
 $$
 for the projective system of $X$-schemes $(f_U\colon Y_U\mto X)$ where $U$ runs over the open normal subgroups of $\fundG(X,\xi)/V$ and $Y_U$ denotes the Galois cover associated to the preimage of $U$ in the fundamental group of $X$. The pro-scheme $Y$ will then be called a \emph{Galois extension} of $X$ with \emph{Galois group}
 $$
 \Gal(Y/X)=\fundG(X,\xi)/V
 $$

 There exists a precise analogue of the cyclotomic $\Z_\ell$-extension of a number field in the geometric setting, namely the unique $\Z_\ell$-extension $\FF_{q^{\ell^{\infty}}}/\FF_q$ of the base field. If $\ell\neq p$ and if we suppose that $X$ is connected and normal, this is in fact the only $\Z_{\ell}$-extension by the Katz-Lang finiteness theorem \cite[Thm. 2.8]{KerzSchmidt}. There are examples of non-normal $X$ with additional $\Z_{\ell}$-extensions. For $\ell=p$ one can find in general infinitely many other $\Z_p$-extensions of $X$.

 The analogue of an admissible $\ell$-adic Lie extension of $F$ is then given by the following

 \begin{definition}
 An Galois extension $Y\mto X$ is defined to be an \emph{admissible $\ell$-adic Lie extension} if the Galois group $\Gal(Y/X)$ is an $\ell$-adic Lie group and if $Y\mto X$ factors through $X\times_{\FF_q}\FF_{q^{\ell^{\infty}}}$.
 \end{definition}

 Comparing with the definition in the number field case \cite[Sect. 1]{CoatesKim:Introduction}, we note that the extension $Y\mto X$ is by definition (pro-)\'etale and therefore unramified. Of course, there is no analogue of a totally real extension for our scheme $X$ and we may simply drop this extra condition.

 \subsection{Algebraic K-Theory}

We chose a slightly different approach to algebraic $K$-theory than the one given in \cite[\S 1]{Sujatha:Reductions}.

Let $R$ be any ring and $S\subset R$ a \emph{left denominator set}, i.\,e.\ a multiplicatively closed subset satisfying
\begin{enumerate}
\item (\emph{Ore condition}) for each $s\in S$, $b\in R$ there exist $s'\in S$, $b'\in R$ such that $b's=s'b$, and
\item (\emph{annihilator condition}) for each $s\in S$, $b\in R$ with $bs=0$ there exists $s'\in S$ with $s'b=0$.
\end{enumerate}
Then the left quotient ring $R_S$ with respect to $S$ exists and one can define a long exact localisation sequence
$$
K_1(R)\mto K_1(R_S)\xrightarrow{\del}K_0(R,R_S)
$$
\cite{WeibYao:Localization}. If $R$ is left noetherian, then the annihilator condition is implied by the Ore condition. A left denominator set is then also referred to as \emph{left Ore set}.

We can obtain the following explicit description of the sequence in terms of perfect complexes. Recall that a complex of modules over a ring $R$ is said to be \emph{strictly perfect} if it is bounded and all modules are finitely generated and projective. The complex is called \emph{perfect} if it is quasi-isomorphic to a strictly perfect complex.

\begin{theorem}\label{thm:rep of K_1}
Let $R$ be any ring and $S\subset R$ be a left denominator set.
\begin{enumerate}
 \item\label{item:fristDescription}
 The group $K_1(R)$ is as abelian group generated by symbols $[f]$ where $f$ is a quasi-automorphism of a perfect complex of $R$-modules $\cmplx{P}$. Moreover, the following (possibly incomplete) list of relations is satisfied:
\begin{enumerate}
\item $[g]+[h]=[g\comp h]$  if $g$ and $h$ are quasi-automorphisms of the same complex $\cmplx{P}$.
\item $[f]=[f']$ if there exists a quasi-isomorphism $a$ such that the diagram
$$
\xymatrix{
\cmplx{P}\ar[r]^f\ar[d]^a&\cmplx{P}\ar[d]^a\\
\cmplx{Q}\ar[r]^{f'}&\cmplx{Q}
}
$$
commutes in the derived category of complexes of $R$-modules.
\item $[b]=[a]+[c]$ if there exists an exact sequence  $0\mto \cmplx{A}\mto\cmplx{B}\mto\cmplx{C}\mto 0$ of perfect complexes such that the diagram
$$
\xymatrix{
0\ar[r]&\cmplx{A}\ar[r]\ar[d]^a&\cmplx{B}\ar[r]\ar[d]^b&\cmplx{C}\ar[r]\ar[d]^c&0\\
0\ar[r]&\cmplx{A}\ar[r]&\cmplx{B}\ar[r]&\cmplx{C}\ar[r]&0
}
$$
commutes in the non-derived category of complexes.
\end{enumerate}
\item The group $K_1(R_S)$ is generated by symbols $[f]$ where $f$ is a morphism of a perfect complex of $R$-modules $\cmplx{P}$ such that the localisation $f_S$ is a quasi-automorphism. The symbols $[f]$ satisfy the same relations as the symbols $[f_S]$ in description~(\ref{item:fristDescription}).
\item The group $K_0(R,R_S)$ is presented by generators $[\cmplx{P}]$ for each perfect complex $\cmplx{P}$ of $R$-modules such that the localisation $\cmplx{P}_S$ has vanishing cohomology and the relations
\begin{enumerate}
\item $[\cmplx{P}]=[\cmplx{Q}]$ if $\cmplx{P}$ and $\cmplx{Q}$ are quasi-isomorphic.
\item $[\cmplx{B}]=[\cmplx{A}]+[\cmplx{C}]$ if there exists a distinguished triangle
$$\cmplx{A}\mto\cmplx{B}\mto\cmplx{C}\mto\cmplx{A[1]}$$ in the derived category of $R$-modules.
\end{enumerate}
\item $\del\colon K_1(R_S)\mto K_0(R,R_S)$ is given by $\del[f]=-[\cmplx{\Cone(f)}]$ where $\cmplx{\Cone(f)}$ denotes the cone of $f$.
\end{enumerate}
In the above description, one may replace every occurrence of \emph{perfect complex} by \emph{strictly perfect complex}.
\end{theorem}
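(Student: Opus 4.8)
The plan is to construct explicit comparison maps between the classical $K$-theory groups and the groups presented by complexes, and then verify that these maps are mutually inverse. The representation of $K_1(R)$ in item~(\ref{item:fristDescription}) is essentially the statement that the category of perfect complexes of $R$-modules, with quasi-isomorphisms inverted, is a Waldhausen category whose $K_1$ agrees with Quillen's $K_1(R)$; I would invoke the Gillet--Waldhausen theorem, which gives an equivalence of $K$-theory spectra between the Waldhausen category of bounded complexes of finitely generated projective $R$-modules (weak equivalences = quasi-isomorphisms, cofibrations = degreewise split injections with projective cokernel) and the Quillen $K$-theory of $R$. On the level of $K_1$, the Nenashev presentation expresses $K_1$ of a Waldhausen category by generators given by pairs of weak equivalences (or by automorphisms in the derived category) subject to relations coming from commuting squares and from admissible short exact sequences of such pairs; translating this presentation through the Gillet--Waldhausen equivalence yields exactly relations (a), (b), (c) as listed. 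The phrase "possibly incomplete list of relations" is important: I only need these relations to hold, not to generate all relations, so it suffices to exhibit a well-defined homomorphism from the free group on the symbols $[f]$ (modulo the listed relations) to $K_1(R)$ together with a surjection in the other direction, and then check surjectivity and injectivity of the composite via the Gillet--Waldhausen identification.

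For item~(3), the group $K_0(R,R_S)$ is by definition $K_0$ of the relative term, i.e.\ of the Waldhausen category of perfect complexes of $R$-modules that become acyclic after localisation at $S$ (equivalently, $S$-torsion perfect complexes), with quasi-isomorphisms as weak equivalences. The presentation by generators $[\cmplx{P}]$ and relations (a) (quasi-isomorphism invariance) and (b) (additivity on distinguished triangles) is then the standard presentation of $K_0$ of a triangulated category, once one knows the homotopy category of this Waldhausen category is triangulated and that $K_0$ of the Waldhausen category coincides with $K_0$ of its homotopy category — this is where one uses that every distinguished triangle can be realised, up to quasi-isomorphism, by an honest short exact sequence of perfect complexes (the standard mapping-cylinder argument), so relation (b) of item~(3) follows from the additivity on short exact sequences and conversely. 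The identification of $K_0(R,R_S)$ as defined by the fibre sequence of $K$-theory spectra with this Waldhausen $K_0$ is the content of the localisation theorem of Weibel--Yao together with Thomason--Trobaugh style localisation for perfect complexes, which gives that the relative $K$-theory is the $K$-theory of the subcategory of $S$-torsion perfect complexes.

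For item~(2), the description of $K_1(R_S)$ proceeds by the same localisation package: $K_1(R_S)$ is $K_1$ of the Waldhausen category of perfect complexes of $R_S$-modules, and every such complex is quasi-isomorphic to $\cmplx{P}_S$ for some perfect complex $\cmplx{P}$ of $R$-modules (again Thomason--Trobaugh: perfect complexes over $R_S$ are, up to quasi-isomorphism, localisations of perfect complexes over $R$), and every quasi-automorphism of $\cmplx{P}_S$ is, after possibly enlarging $\cmplx{P}$, the localisation $f_S$ of a morphism $f$ of $\cmplx{P}$; the relations carry over verbatim because localisation is exact. Finally, item~(5) — the formula $\del[f] = -[\cmplx{\Cone(f)}]$ — is a compatibility between the boundary map in the localisation sequence and the cone construction; I would check it on the level of the $K$-theory spectra by recalling that the connecting map in a Waldhausen fibration sequence sends the class of a self-equivalence $f$ of $\cmplx{P}$ to (minus) the class of its cone viewed as an object of the relative category, which is a standard computation with the path-space/cone model (the cone of $f$ is indeed $S$-torsion perfect since $f_S$ is a quasi-isomorphism), the sign being fixed by the orientation convention in the long exact sequence. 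The last sentence of the theorem — that "perfect" may everywhere be replaced by "strictly perfect" — follows because the inclusion of strictly perfect complexes into perfect complexes induces an equivalence on $K$-theory (every perfect complex is quasi-isomorphic to a strictly perfect one, and this is a cofinal/idempotent-complete inclusion inducing a $K$-theory equivalence).

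I expect the main obstacle to be bookkeeping rather than conceptual: assembling the correct Waldhausen-categorical localisation sequence so that its $K_0$ and $K_1$ match the Weibel--Yao groups $K_0(R,R_S)$ and the naive $K_1(R_S)$, and then pushing Nenashev's generators-and-relations presentation of $K_1$ through the Gillet--Waldhausen equivalence while keeping track of signs (particularly in relation (c) and in item~(5)). A secondary subtlety is that $R$ is not assumed noetherian and $S$ is only a left denominator set, so one must be careful that the relevant localisation statements for perfect complexes (flatness of $R\to R_S$, exactness of $-\otimes_R R_S$, and the fact that $S$-torsion perfect complexes form the fibre category) still hold at this level of generality — these do hold because $R_S$ is flat as a right $R$-module under the Ore condition, but this should be stated explicitly.
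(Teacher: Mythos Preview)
Your proposal is broadly correct and lands in the same circle of ideas as the paper's argument, but the specific tools differ. The paper does not route through Gillet--Waldhausen and Nenashev; instead it cites the Weibel--Yao construction of the localisation sequence as a cofibre sequence of Waldhausen categories, then invokes the Muro--Tonks algebraic model of the first Postnikov section of a Waldhausen $K$-theory spectrum to extract generators (weak equivalences of complexes) and relations for $K_1$, together with Muro's theorem that ordinary and ``derived'' $K$-theory agree in degrees $0$ and $1$, and finally a direct computation of the boundary map (carried out elsewhere by the author). The Muro--Tonks model already delivers generators of $K_1$ as classes of weak self-equivalences, which matches the statement on the nose; your route via Nenashev requires a translation step, since Nenashev's original presentation is for exact categories with generators given by double short exact sequences rather than quasi-automorphisms of complexes, and you would have to explain how that becomes the description here (in effect re-deriving part of what Muro--Tonks provides).

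Two smaller points. First, your appeal to Thomason--Trobaugh for lifting perfect complexes and morphisms along $R\to R_S$ is misplaced---that machinery is tailored to open immersions of schemes; the right reference in this purely ring-theoretic Ore setting is Weibel--Yao, which you also mention and which already contains the needed approximation/cofinality statements. Second, your justification of $\partial[f]=-[\Cone(f)^{\bullet}]$ as a ``standard computation'' is where signs and conventions actually bite; the paper treats this as a separate lemma proved elsewhere rather than something one reads off from the general fibration formalism, and you should expect to have to unwind the specific model of the connecting map to pin down the sign.
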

\begin{proof}
This follows from the construction of the localisation sequence from a cofibre sequence of certain Waldhausen categories \cite{WeibYao:Localization}, the algebraic description of the first Postnikov section of the associated topological spaces \cite{MT:1TWKTS}, the identicalness of $K$-theory and 'derived' $K$-theory in degrees $0$ and $1$ \cite[Thm.~5.1]{Muro:Maltsiniotis}, and the explicit description of the boundary homomorphism \cite[Thm.~A.5]{Witte:MCVarFF}.
\end{proof}

\begin{remark}
We stress that Theorem~\ref{thm:rep of K_1} might not give a presentation of $K_1(R)$ or $K_1(R_S)$. We have only listed the relations which are obvious from the description as the kernel of a certain group homomorphism given in \cite{MT:1TWKTS}. The precise set of relations will not be needed in the subsequent arguments.
\end{remark}

The derived tensor product with an $R'$-$R$-bimodule $M$ which is finitely generated and projective as $R'$-module defines a group homomorphism $K_1(R)\mto K_1(R')$. In order to define this map in terms of the above presentation with perfect complexes, we can use the following

\begin{lemma}\label{lem:replacing quasiautomorphisms}
Let $f\colon \cmplx{P}\mto\cmplx{P}$ be a endomorphism of a perfect complex $\cmplx{P}$ of $R$-modules. There exists a endomorphism $f'$ of a strictly perfect complex $\cmplx{Q}$ and a quasi-isomorphism $q$ such that the diagram
$$
\xymatrix{
\cmplx{Q}\ar[r]^{q}\ar[d]^{f'}&\cmplx{P}\ar[d]^f\\
\cmplx{Q}\ar[r]^{q}&\cmplx{P}
}
$$
commutes in the derived category of complexes of $R$-modules.
\end{lemma}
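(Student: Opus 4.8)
The plan is to realize $f$, up to quasi-isomorphism, as an honest chain endomorphism of a single fixed strictly perfect model of $\cmplx{P}$, the only real input being that a bounded complex of projective modules is homotopically projective.

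First I would use that $\cmplx{P}$ is perfect to choose a strictly perfect complex $\cmplx{Q}$ together with an isomorphism $\cmplx{Q}\isomorph\cmplx{P}$ in the derived category $\mathrm{D}(R)$ of complexes of $R$-modules. Since $\cmplx{Q}$ is a bounded complex of projective modules it is homotopically projective, so the canonical map $\Hom_{\mathrm{K}(R)}(\cmplx{Q},\cmplx{P})\mto\Hom_{\mathrm{D}(R)}(\cmplx{Q},\cmplx{P})$ from the homotopy category is bijective; hence the chosen isomorphism is represented by a genuine chain map $q\colon\cmplx{Q}\mto\cmplx{P}$, which is then automatically a quasi-isomorphism. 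This $q$, and $\cmplx{Q}$ itself, will be the objects produced by the lemma.

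Next, let $q^{-1}\in\Hom_{\mathrm{D}(R)}(\cmplx{P},\cmplx{Q})$ denote the inverse of $q$ in $\mathrm{D}(R)$ and put $g:=q^{-1}\comp f\comp q\in\Hom_{\mathrm{D}(R)}(\cmplx{Q},\cmplx{Q})$. Applying homotopical projectivity of $\cmplx{Q}$ once more, the map $\Hom_{\mathrm{K}(R)}(\cmplx{Q},\cmplx{Q})\mto\Hom_{\mathrm{D}(R)}(\cmplx{Q},\cmplx{Q})$ is bijective, so $g$ is represented by a genuine chain endomorphism $f'\colon\cmplx{Q}\mto\cmplx{Q}$. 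By construction $q\comp f'=q\comp g=f\comp q$ in $\mathrm{D}(R)$, which is precisely the commutativity of the asserted square in the derived category.

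The only nonformal ingredient, and hence the step I would treat most carefully, is the claim that a bounded (indeed, merely bounded above) complex of projective $R$-modules $\cmplx{Q}$ is homotopically projective, equivalently that for such $\cmplx{Q}$ every morphism in $\mathrm{D}(R)$ out of $\cmplx{Q}$ comes from a chain map, unique up to homotopy, and that quasi-isomorphisms induce isomorphisms on the groups $\Hom_{\mathrm{D}(R)}(\cmplx{Q},-)$. This is standard homological algebra, proved by a dimension-shift / explicit contracting-homotopy argument on $\Hom(\cmplx{Q},A)$ for acyclic $A$, and it is valid over an arbitrary ring, so no noetherian or further finiteness hypothesis is required beyond boundedness of $\cmplx{Q}$. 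Everything else in the argument is a formal manipulation inside $\mathrm{D}(R)$.
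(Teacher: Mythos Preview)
Your argument is correct and is essentially the same as the paper's: both rest on the fact that a strictly perfect complex is homotopically projective, so morphisms from it in the derived category coincide with homotopy classes of chain maps, which immediately yields both the chain-level quasi-isomorphism $q$ and the endomorphism $f'$. You have simply spelled out the details that the paper leaves implicit.
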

\begin{proof}
This follows from the well-known fact that the morphisms from a strictly perfect complex $\cmplx{Q}$ to a complex $\cmplx{P}$ in the derived category are the same as the set of homotopy classes of complex homomorphisms $\cmplx{Q}\mto\cmplx{P}$.
\end{proof}

We may now define $K_1(R)\mto K_1(R')$ by mapping a generator $[f]$ to $[M\tensor_R f']$ for any choice of $f'$ as in the lemma. The construction extends to our presentation of $K_1(R_S)$ and $K_0(R,R_S)$ in the obvious way.

Given an admissible $\ell$-adic Lie extension $Y\mto X$ we shall put as in the number field case
 $$
 G=\Gal(Y/X),\qquad \Gamma=\Gal(\FF_{q^{\ell^{\infty}}}/\FF_q),\qquad H=\ker(G\mto \Gamma).
 $$
 In particular, $G$ is a semidirect product of $H$ and $\Gamma$. We will write $\Lambda_{\Order}(G)$ for its Iwasawa algebra with coefficients in the valuation ring $\Order$ of a finite field extension of $\Q_{\ell}$ and $\Lambda_{\Order}(G)_{S}$ for the quotient ring with respect to \emph{Venjakob's canonical Ore set}
 $$
 S=\{f\in \Lambda(G)\colon \text{$\Lambda_{\Order}(G)/\Lambda_{\Order}(G)f$ is finitely generated as $\Lambda(H)$-module}\}.
 $$
In most situations, we omit the subscript $\Order$.

Considering the localisation sequence for $S\subset \Lambda(G)$, we can even prove that it gives rise to a split exact sequence
$$
0\mto K_1(\Lambda(G))\mto K_1(\Lambda(G)_S)\xrightarrow{\del} K_0(\Lambda(G),\Lambda(G)_S)\mto 0.
$$
\cite[Cor. 3.4]{Witte:Splitting}. We also recall that
$$
K_1(\Lambda(G))=\varprojlim_{n,U\onsg G}K_1(\Order/\ell^n\Order[G/U])
$$
carries a natural structure of a profinite group \cite[Prop. 1.5.3]{FK:CNCIT}. As in \cite{Kakde2}, we define
\begin{align*}
SK_1(\Lambda(G))&=\varprojlim_{U\onsg G}SK_1(\Order[G/U]),\\
K'_1(\Lambda(G))&=K_1(\Lambda(G))/SK_1(\Lambda(G)),\\
K'_1(\Lambda(G)_S)&=K_1(\Lambda(G)_S)/SK_1(\Lambda(G)).
\end{align*}

If $\rho\colon G\mto \GL_n(\Order)$ is a continuous character for the valuation ring $\Order$ of any finite field extension of $\Q_{\ell}$, we obtain an evaluation map
$$
\Phi_\rho\colon K_1(\Lambda(G)_S)\xrightarrow{\mathrm{tw}_{\rho}} K_1(\Lambda(G)_S)\xrightarrow{\mathrm{pr}}K_1(\Lambda(\Gamma)_S)\xrightarrow[\isomorph]{\det} \Lambda(\Gamma)_S^{\times}.
$$
Here, $\Lambda(\Gamma)_S$ is the localisation of $\Lambda(\Gamma)$ at the prime ideal generated by the maximal ideal of $\Order$, the map $\mathrm{pr}$ is the canonical projection, and the map $\mathrm{tw}_{\rho}$ is given by
$$
\mathrm{tw}_{\rho}[\cmplx{P}\xrightarrow{f}\cmplx{P}]=[\rho\tensor_{\Order}\cmplx{P}\xrightarrow{\id\tensor f}\rho\tensor_{\Order}\cmplx{P}]
$$
on morphisms $f$ of strictly perfect complexes $\cmplx{P}$. If $\rho\colon G\mto \GL_n(\Order)$ has finite image, it is immediate that $\Phi_\rho(SK_1(\Lambda(G))=1$ and hence, $\Phi_\rho$ factors through $K'_1(\Lambda(G)_S)$.

We stress that none of the results of this section are specific to the geometric nature of our conjecture. They apply equally well to the number field case.

 \subsection{A Crash Course in Etale Cohomology}

 Next, we need to find an analogue of the $\Lambda(G)$-module $\X(F_{\infty})$ that features in the main conjecture for number fields. For this, it is necessary to take a step back and have a look at the larger picture.

 In the following, we will make heavy use of the formalism of \'etale cohomology. We have to refer the reader to \cite{Milne:EtCohom} for a thorough introduction. \'Etale cohomology is a cohomology theory in the spirit of sheaf cohomology on topological spaces. It is related to the cohomology of the \'etale fundamental group just as this sheaf cohomology is related to the cohomology of the classical fundamental group.

 For any noetherian ring $R$ and any scheme $X$ (of finite type over a sufficiently nice base scheme, e.\,g.\ $\Spec \FF_q$ or $\Spec \Z$) one defines a certain abelian category of \emph{constructible \'etale sheaves of $R$-modules} $\cat{Sh}(X,R)$ together with
\begin{itemize}
\item a global section functor $\Sect_{\et}(X,\cdot)$ assigning an $R$-module to any sheaf in $\cat{Sh}(X,R)$,
\item constructions of inverse image functors $f^*\colon\cat{Sh}(X',R)\mto\cat{Sh}(X,R)$ and direct image functors $f_*\colon\cat{Sh}(X,R)\mto\cat{Sh}(X',R)$ for morphisms $f\colon X\mto X'$,
\item an extension-by-zero functor $j_!\colon \cat{Sh}(X,R)\mto\cat{Sh}(X',R)$ for open immersions $j\colon X\mto X'$,
\item a tensor product $\sheaf{F}\tensor_{R}\sheaf{G}$ of sheaves $\sheaf{F}$ and $\sheaf{G}$ in $\cat{Sh}(X,R)$,
\item for any sheaf $\sheaf{F}$ on $X$ a Godement resolution $\cmplx{\God_X(\sheaf{F})}$, i.\,e.\ a complex  of flasque sheaves that may be used to define higher derived functors.
\end{itemize}
We may apply the section functor to each degree of the Godement resolution to define a total derived section functor
$$
\RDer\Sect_{\et}(X,\sheaf{F})=\Sect_{\et}(X,\cmplx{\God_X}(\sheaf{F}))
$$
and likewise, we may also define a total higher derived image functor $\RDer f_*$. If $f\colon X\mto Y$ is a separated morphism of finite type over a noetherian scheme $Y$, we can fix a commutative diagram
$$
\xymatrix{
X\ar[r]^{j}\ar[dr]^f& X^c\ar[d]^{f^c}\\
&Y}
$$
with an open immersion $j$ and a proper morphism $f^c$. We may then define a total derived image functor with proper support
$$
\RDer f_!\sheaf{F})=f^c_*\cmplx{\God}_{X^c}(j_!\sheaf{F})
$$
as an analogue of the total derived image functor with compact support from topology. (One can prove that this construction is independent of the choice of $X^c$ up to quasi-isomorphism, see \cite[Arcata, IV, \S 5]{SGA4h}.) If $X$ is separated and of finite type over $\FF_q$, we may apply this construction to the structure morphism $s\colon X\mto \Spec \FF_q$ to define a total derived section functor with proper support
$$
\RDer \Sectc(X,\sheaf{F})=\Sect_{\et}(\Spec \FF_q,\RDer s_!\sheaf{F}).
$$

This construction works fine for finite rings $R$, but it does not give the right cohomology groups if we apply it directly to rings such as $\Z_{\ell}$ or $\Lambda(G)$. Instead, we have go a step further and define a continuous version of it. The following definition is a straight-forward generalisation of \cite[Rapport, Def. 2.1]{SGA4h} (see also \cite[Exp. VI]{SGA5}).

\begin{definition}\label{def:flat R-sheaf}
Let $R$ be a profinite ring. A \emph{flat $R$-sheaf} on $X$ is a projective system $(\sheaf{F}_I)$ indexed by the open (two-sided) ideals of $R$ such that $\sheaf{F}_I$ is a flat sheaf in $\cat{Sh}(X,R/I)$ and such that for $I\subset J$ the transition morphism $\sheaf{F}_I\mto \sheaf{F}_J$ factorises through an isomorphism
$$
R/J\tensor_{R/I}\sheaf{F}_I\isomorph \sheaf{F}_J.
$$
We denote the category of flat $R$-sheafs by $\Sh(X,R)$.
\end{definition}

The above constructions of $\RDer f_*$ and $\RDer f_!$ extend to $R$-sheaves by applying them to each element of the projective system individually. We redefine the total derived section functor by additionally taking the total derived inverse limit of the resulting projective system of complexes.

In order to give an $R$-sheaf on $X$, it suffices to know it on a cofinal system of open ideals. In particular, if $\sheaf{F}=(\sheaf{F}_{(\ell^n)})$ is a flat $\Z_{\ell}$-sheaf and $f\colon Y\mto X$ is an admissible $\ell$-adic Lie extension with Galois group $G$, we obtain a flat $\Lambda(G)$-sheaf
$$
\sheaf{F}_G=({f_U}_*f_U^*\sheaf{F}_{(\ell^n)})_{U\onsg G,n>0}.
$$
To check the conditions in Definition~\ref{def:flat R-sheaf}, it suffices to observe that the stalk $({f_U}_*f_U^*\sheaf{F}_{(\ell^n)})_{\xi}$ in a geometric point $\xi$ of $X$ is isomorphic to $\Z/\ell^n\Z[G/U]\tensor_{\Z/\ell^n\Z}(\sheaf{F}_{\xi})$.

Moreover, we may assign to each continuous $\Z_{\ell}$-representation
$$
\rho\colon \fundG(X,\xi)\mto \GL_k(\Z_{\ell})
$$
a flat $\Z_\ell$-sheaf
$$
\sheaf{M}(\rho)=(\rho^{-1}\tensor_{\Lambda(\fundG(X,\xi))}{f_{U_n}}_*f_{U_n}^*\Z/\ell^n\Z)_{n\geq 0}
$$
by choosing $U_n=\ker(\fundG(X,\xi)\mto \GL_k(\Z/\ell^n\Z))$. In this way, the category of continuous $\Z_{\ell}$-representations becomes a full subcategory of $\Sh(X,\Z_{\ell})$. More generally, we may replace $\Z_{\ell}$ by any compact noetherian commutative ring. If $X$ is the \'etale analogue of a $K(\pi,1)$-space, e.\,g.\ a smooth affine curve over a finite field of characteristic prime to $\ell$ or an open dense subscheme of $\Spec O_{F}[\frac{1}{\ell}]$, then the \'etale cohomology $\RDer\Sect_{\et}(X,\sheaf{M}(\rho))$ agrees with the continuous group cohomology of $\rho$, but in general, it is the \'etale cohomology and not the group cohomology that leads to the right constructions.

The following complex is our replacement for the module $\X(F_{\infty})$. We will explain this in more detail in Section~\ref{sec:Function fields}.

\begin{definition}
For any flat $\Z_\ell$-sheaf $\sheaf{F}$ we set
$$
C(Y/X,\sheaf{F})=\RDer\Sectc(X,\sheaf{F}_G)
$$
\end{definition}

By construction $C(Y/X,\sheaf{F})$ is  a complex of $\Lambda(G)$-modules. We can say even more:

\begin{proposition}\label{prop:perfectness}
The complex $C(Y/X,\sheaf{F})$ is a perfect complex of $\Lambda(G)$-modules, i.\,e.\ there exists a bounded complex $\cmplx{Q}$ of finitely generated, projective $\Lambda(G)$-modules and a quasi-isomorphism $q\colon\cmplx{Q}\mto C(Y/X,\sheaf{F})$.
\end{proposition}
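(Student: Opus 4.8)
\emph{Overall strategy.} The plan is to prove perfectness by descending from the finite levels of the projective system defining $\sheaf{F}_G$, where the classical finiteness theorems for étale cohomology apply, and then to glue the finite-level complexes into a single strictly perfect complex over $\Lambda(G)$ by a completeness argument. As a first step I would reduce to finite coefficients. We may assume $\Order=\Z_\ell$; the general case follows by base change along the finite free ring homomorphism $\Lambda_{\Z_\ell}(G)\mto\Lambda_{\Order}(G)$. Write $R=\Lambda(G)$ and, for $U\onsg G$ and $n>0$, set $R_{n,U}=\Z/\ell^n\Z[G/U]$; the kernels $I_{n,U}$ of the surjections $R\mto R_{n,U}$ form a cofinal system of open ideals, and $(\sheaf{F}_G)_{I_{n,U}}={f_U}_*f_U^*\sheaf{F}_{(\ell^n)}$. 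Since $f_U\colon Y_U\mto X$ is finite, hence proper, we have ${f_U}_!={f_U}_*$, and therefore $\RDer\Sectc(X,{f_U}_*f_U^*\sheaf{F}_{(\ell^n)})\qiso\RDer\Sectc(Y_U,f_U^*\sheaf{F}_{(\ell^n)})$ canonically and compatibly in $n$ and $U$. By the definition of the continuous derived section functor with proper support, $C(Y/X,\sheaf{F})\qiso\RDer\varprojlim_{n,U}\RDer\Sectc(Y_U,f_U^*\sheaf{F}_{(\ell^n)})$.

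\emph{Perfectness and base change at the finite level.} The stalk computation $({f_U}_*f_U^*\sheaf{F}_{(\ell^n)})_{\xi}\isomorph\Z/\ell^n\Z[G/U]\tensor_{\Z/\ell^n\Z}\sheaf{F}_{\xi}$ recalled above shows that ${f_U}_*f_U^*\sheaf{F}_{(\ell^n)}$ is a constructible, flat sheaf of $R_{n,U}$-modules whose stalks are finite free $R_{n,U}$-modules. By the finiteness theorem for étale cohomology with proper support over $\FF_q$ (\cite[Th. finitude]{SGA4h}, \cite[Arcata]{SGA4h}) and the projection formula — which applies precisely because the sheaf is flat — the complex $\RDer\Sectc(Y_U,f_U^*\sheaf{F}_{(\ell^n)})$ has finite Tor-amplitude and finitely generated cohomology, hence is represented by a bounded complex of finite projective $R_{n,U}$-modules; moreover its amplitude lies in $[0,2\dim X+1]$, \emph{independently of $n$ and $U$}. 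The same projection formula together with flat base change gives, for $I_{n,U}\subset I_{n',U'}$, canonical quasi-isomorphisms $R_{n',U'}\tensor^{\LDer}_{R_{n,U}}\RDer\Sectc(Y_U,f_U^*\sheaf{F}_{(\ell^n)})\qiso\RDer\Sectc(Y_{U'},f_{U'}^*\sheaf{F}_{(\ell^{n'})})$.

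\emph{Assembly over $\Lambda(G)$, and the main obstacle.} It remains to invoke the following general principle for the profinite ring $R=\varprojlim_I R/I$: if $(\cmplx{C}_I)$ is a projective system of perfect complexes of $R/I$-modules, indexed by the open ideals $I$, equipped with compatible quasi-isomorphisms $R/J\tensor^{\LDer}_{R/I}\cmplx{C}_I\qiso\cmplx{C}_J$ for $I\subset J$, then $\RDer\varprojlim_I\cmplx{C}_I$ is a perfect complex of $R$-modules with $R/I\tensor^{\LDer}_R\RDer\varprojlim_I\cmplx{C}_I\qiso\cmplx{C}_I$. Feeding the system of the previous step into this principle proves the proposition. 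The proof of the principle is where the real work lies: the uniform amplitude bound above and a derived Nakayama argument over the finite semisimple ring $R/\mathrm{rad}\,R$ show that $\RDer\varprojlim_I\cmplx{C}_I$ has bounded, finite Tor-amplitude; one then lifts a minimal complex of finite free modules representing $R/\mathrm{rad}\,R\tensor^{\LDer}_R\RDer\varprojlim_I\cmplx{C}_I$ through the pro-nilpotent tower $\{R/I\}$, using the completeness of $R$, to a strictly perfect complex $\cmplx{Q}$ together with a compatible morphism $\cmplx{Q}\mto\RDer\varprojlim_I\cmplx{C}_I$, and checks that it is a quasi-isomorphism by reducing modulo $\mathrm{rad}\,R$ and appealing to completeness once more. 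The main obstacle is precisely this last step: one must make the lifting through $\{R/I\}$ coherent enough to produce a single \emph{finitely generated} projective module in each degree over the non-Artinian ring $\Lambda(G)$ — rather than merely a pro-object — which is exactly the content of the perfectness results for flat $R$-sheaves on schemes over finite fields established in \cite{Witte:MCVarFF}.
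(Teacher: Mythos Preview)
Your proposal is correct and follows essentially the same route as the paper: perfectness at each finite level via the finiteness theorem in \cite{SGA4h}, base-change compatibility between levels (the paper phrases this as a K\"unneth formula, you as a projection formula), and then assembly into a strictly perfect $\Lambda(G)$-complex by lifting through the Jacobson-radical tower and invoking the topological Nakayama lemma. The paper is slightly more explicit about the lifting step, citing \cite[XV, 3.3, Lemma~1]{SGA5} to rigidify the transition maps into genuine isomorphisms of strictly perfect complexes before passing to the inverse limit, and \cite{Brumer:PseudocompactAlgebras} for projectivity and finite generation, whereas you bundle this into your ``general principle'' and defer to \cite{Witte:MCVarFF}; but the underlying argument is the same.
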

\begin{proof}
By \cite[p. 95, Th. 4.9]{SGA4h}, the complex $\RDer\Sectc(X,{f_U}_*f_U^*\sheaf{F})$ is a perfect complex of $\Z/\ell^n\Z[G/U]$-modules for any flat \'etale sheaf of $\Z/\ell^n\Z$-modules $\sheaf{F}$. To pass from this statement to the statement of the proposition is not completely straightforward. We give some details, following \cite[XV, p. 472--492]{SGA5} and clarifying \cite[Prop. 3.1.(ii)]{Burns:MCinGIwTh+RelConj}. A slightly different proof is given in \cite[Prop. 5.2.3 + Def. 5.4.13]{Witte:PhD}. Both proofs use that $\Lambda(G)$ is compact for the topology defined by the powers of the Jacobson radical $\Jac(\Lambda(G))$:
$$
\Lambda(G)=\varprojlim_{n}\Lambda(G)/\Jac(\Lambda(G))^n.
$$
In particular, $G$ admits a fundamental system of neighbourhoods $(U_n)$ of open normal subgroups indexed by the positive integers such that the kernel of $\Lambda(G)\mto\Z/\ell\Z[G/U_1]$ is contained in $\Jac(\Lambda(G))$. The K\"unneth formula for the trivial product of $X$ with the spectrum of its base field implies that there exists a quasi-isomorphism
\begin{gather*}
\Z/\ell^n\Z[G/U_n]\tensor^{\LDer}_{\Z/\ell^{n+1}\Z[G/U_{n+1}]}\RDer\Sectc(X,{f_{U_{n+1}}}_*f_{U_{n+1}}^*\sheaf{F}_{(\ell^{n+1})})\\
\downarrow\\
\RDer\Sectc(X,\Z/\ell^n\Z[G/U_n]\tensor_{\Z/\ell^{n+1}\Z[G/U_{n+1}]}{f_{U_{n+1}}}_*f_{U_{n+1}}^*\sheaf{F}_{(\ell^{n+1})})\\
\parallel\\
\RDer\Sectc(X,{f_{U_{n}}}_*f_{U_{n}}^*\sheaf{F}_{(\ell^{n})}).
\end{gather*}
By sucessively applying \cite[XV, 3.3, Lemma 1]{SGA5} (the proof of which uses lifting of idempotents for $\Lambda(G)$ as a central ingredient) one finds a quasi-isomorphism of projective system of complexes
$$
(q_n)\colon(\cmplx{Q}_n)\mto (\RDer\Sectc(X,{f_{U_{n}}}_*f_{U_{n}}^*\sheaf{F}_{(\ell^{n})})
$$
with $\cmplx{Q}_n$ being a strictly perfect complex of $\Z/\ell^n\Z[G/U_n]$-modules such that
$$
\Z/\ell^n\Z[G/U_n]\tensor_{\Z/\ell^{n+1}\Z[G/U_{n+1}]}\cmplx{Q}_{n+1}\mto \cmplx{Q}_n
$$
is an \emph{isomorphism} (not merely a quasi-isomorphism) of complexes. We set
$$
\cmplx{Q}=\varprojlim_{n}\cmplx{Q}_n.
$$
Using that projective systems of finite modules are $\varprojlim$-acyclic \cite[Cor. 7.2]{Jensen:FuncteusDerives}, we obtain a quasi-isomorphism
$$
q\colon \cmplx{Q}\mto C(Y/X,\sheaf{F}).
$$
Each $Q^k$ is projective as a compact $\Lambda(G)$-module \cite[Cor.~3.3]{Brumer:PseudocompactAlgebras} and the complex $\cmplx{Q}$ is bounded. Moreover, since the transition morphisms in the system $(\cmplx{Q}_n)$ are surjective, we have
$$
\Z/\ell\Z[G/U_1]\ctensor_{\Lambda(G)}\cmplx{Q}=\varprojlim_{n}\Z/\ell\Z[G/U_1]\tensor_{\Z/\ell^n\Z[G/U_n]}\cmplx{Q}_n=\cmplx{Q}_1
$$
\cite[Lemma~A.4]{Brumer:PseudocompactAlgebras}. Hence, each $Q^k$ is finitely generated by the topological Nakayama lemma. In particular, $Q^k$ is also projective as abstract $\Lambda(G)$-module.
\end{proof}

The K\"unneth-formula for the trivial product as in the proof above shows that $C(Y/X,\sheaf{F})$ behaves well with respect to derived tensor products. In particular, we have quasi-isomorphisms
$$
  \Lambda(\Gal(Y'/X))\tensor^{\LDer}_{\Lambda(\Gal(Y/X))}C(Y/X,\sheaf{F})\qiso C(Y'/X,\sheaf{F})
$$
for any subextension $Y'/X$ of $Y/X$. Moreover,
$$
\rho\tensor^{\LDer}_{\Z_{\ell}}C(Y/X,\sheaf{F})\qiso C(Y/X,\sheaf{M}(\rho)\tensor_{\Z_{\ell}}\sheaf{F})
$$
for any continuous $\ell$-adic representation $\rho$ of $\Gal(Y/X)$.

\subsection{L-Functions}

In this subsection we will recall Grothendieck's and Deligne's fundamental results on $L$-functions for $\Z_{\ell}$-sheaves on the scheme $X$ over $\FF_q$. In the case $\ell=p$ this is complemented by a result of Emmerton and Kisin \cite{EmertonKisin}.

Let $X$ be a geometrically connected scheme of finite type over $\FF_q$. We let $X^0$ denote the set of closed points of $X$. For any $x\in X^0$ we let $\deg(x)$ denote the degree of the residue field $k(x)$ of $x$ over $\FF_q$ and $N(x)=q^{\deg(x)}$ the order of $k(x)$. Furthermore, we fix an algebraic closure $\bar{k}(x)$ of $k(x)$ and let denote $\Frob_x\in\fundG(X,\xi)$ its geometric Frobenius element acting by $a\mapsto a^{N(x)^{-1}}$ for $a\in \bar{k}(x)$. For any representation
$$
\rho\colon\fundG(X,\xi)\mto \GL_n(\Order_E)
$$
on the ring of integers $\Order_E$ of a number field $E$, we may define a complex function
$$
L^A(\rho,s)=\prod_{x\in X^0}E_x(N(x)^{-s})^{-1},\qquad E_x(t)=\det(1-\rho(\Frob_x)t)
$$
as direct analogue of an Artin L-function for a number field. Different from the number field case, we obtain $L^A(\rho,s)$ by evaluating the formal power series with $\Order_E$-coefficients
$$
L(\rho,t)=\prod_{x\in X^0}E_x(t^{\deg(x)})^{-1}
$$
in $q^{-s}$. In particular, we may view $L(\rho,t)$ as an element of $\Order[[t]]^{\times}$ for any completion $\Order$ of $\Order_E$ at a prime of $E$. (We warn the reader that this is not yet the precise analogue of the Iwasawa power series of an $\ell$-adic $L$-function.) More generally, one may define $L(\sheaf{F},t)\in\Order[[t]]^{\times}$ for any compact commutative noetherian $\Z_{\ell}$-algebra $\Order$ and any flat $\Order$-sheaf $\sheaf{F}$ by taking the product over the inverses of the characteristic polynomials of the Frobenius operation on the stalks of $\sheaf{F}$.

There is a second power series that we may attach to the $\Order$-sheaf $\sheaf{F}$. Recall that the determinant induces an isomorphism
$$
\det\colon K_1(\Order[[t]])\mto \Order[[t]]^{\times}.
$$
Let $s\colon X\mto \Spec \FF_q$ denote the structure morphism. By the same argument as in the proof of Proposition~\ref{prop:perfectness}, the complex
$
\Sect_{\et}(\bar{\FF}_q,\RDer s_!\sheaf{F})
$
is a perfect complex of $\Order$-modules with an action of the Frobenius $\Frob\in \Gal(\bar{\FF}_q/\FF_q)$. We may set
$$
L_!(\sheaf{F},t)=\det[1-t\Frob\colon \Order[[t]]\tensor_{\Order}\Sect_{\et}(\bar{\FF}_q,\RDer s_!\sheaf{F})]^{-1}\in \Order[[t]]^{\times}.
$$
The interesting property of $L_!(\sheaf{F},t)$ is that it is in fact a rational function. To see this, we use Lemma~\ref{lem:replacing quasiautomorphisms} to replace $\Frob$ by an endomorphism $f$ on a strictly perfect complex $\cmplx{Q}$. By the relations given in Theorem~\ref{thm:rep of K_1} we may then write
$$
L_!(\sheaf{F},t)=\prod_{n\in\Z} \det[1-tf\colon \Order[[t]]\tensor_{\Order}Q^n]^{(-1)^{n+1}}.
$$

We introduce the power series
$$
v(\sheaf{F},t)=L(\sheaf{F},t)/L_!(\sheaf{F},t)
$$
to measure the difference. Furthermore, we set
$$
\Order\langle t\rangle=\varprojlim_{n}\Order/\Jac(\Order)^n[t]
$$
and write $\Order\{t\}$ for the localisation of $\Order\langle t\rangle$ at the multiplicatively closed subset of those elements which become a unit in $\Order[[t]]$. In particular, we have
$$
L_!(\sheaf{F},t)\in\Order\{t\}^{\times}.
$$

\begin{theorem}\label{thm:trace formula}
Let $\ell$ be any prime, $\Order$ a compact, commutative, noetherian $\Z_{\ell}$-algebra and $\sheaf{F}$ an $\Order$-sheaf on $X$.
\begin{enumerate}
\item (Grothendieck-Deligne) If $\ell\neq p$ then $v(\sheaf{F},t)=1$.
\item (Emmerton-Kisin) If $\ell=p$ then $v(\sheaf{F},t)\in\Order\langle t\rangle^{\times}$.
\end{enumerate}
In particular, we have $L(\sheaf{F},t)\in\Order\{t\}^{\times}$ in both cases.
\end{theorem}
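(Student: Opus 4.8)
The plan is to reduce both assertions to the classical trace formulas over finite coefficient rings and then to pass to the projective limit. Write $\sheaf{F}=(\sheaf{F}_I)$ as in Definition~\ref{def:flat R-sheaf}, the index $I$ running over the open ideals of $\Order$, so that $\Order/I$ is a finite ring and $\sheaf{F}_I$ an ordinary constructible sheaf of $\Order/I$-modules. The first step is to check that the formation of $L$ and $L_!$ commutes with reduction modulo $I$, i.\,e.\ $L(\sheaf{F},t)\bmod I=L(\sheaf{F}_I,t)$ and $L_!(\sheaf{F},t)\bmod I=L_!(\sheaf{F}_I,t)$ inside $(\Order/I)[[t]]^{\times}$, and hence $v(\sheaf{F},t)=\varprojlim_I v(\sheaf{F}_I,t)$ in $\Order[[t]]^{\times}=\varprojlim_I(\Order/I)[[t]]^{\times}$. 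For $L$ this rests on the fact (topological Nakayama, exactly as in the proof of Proposition~\ref{prop:perfectness}) that each Frobenius-stalk $\sheaf{F}_{\bar x}$ is a finitely generated projective $\Order$-module with $\sheaf{F}_{\bar x}/I\sheaf{F}_{\bar x}\isomorph\sheaf{F}_{I,\bar x}$, so that every Euler factor $E_x(t)\in\Order[t]$ reduces to that of $\sheaf{F}_I$ and the coefficient-wise finite product behaves well. For $L_!$ one represents the perfect complex $\Sect_{\et}(\bar{\FF}_q,\RDer s_!\sheaf{F})$ of $\Order$-modules by a strictly perfect $\cmplx{Q}$ with Frobenius realised by an endomorphism $f$ (Lemma~\ref{lem:replacing quasiautomorphisms}), applies the K\"unneth formula for the trivial product — just as in Proposition~\ref{prop:perfectness} — to identify $\Order/I\tensor^{\LDer}_{\Order}\cmplx{Q}$ with $\Sect_{\et}(\bar{\FF}_q,\RDer s_!\sheaf{F}_I)$, and then reads off the compatibility from the explicit formula for $L_!$ recorded just before the theorem.

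With this reduction in hand, part~(1) is the Grothendieck--Lefschetz trace formula for constructible torsion sheaves in the case $\ell\neq p$ (\cite{SGA4h}, \cite{SGA5}): for a constructible sheaf of $\Order/I$-modules one has $\prod_{x\in X^0}\det(1-\Frob_x t^{\deg x}\mid\sheaf{F}_{I,\bar x})^{-1}=\prod_i\det(1-\Frob t\mid\HF^i_{\mathrm{c}}(X_{\bar{\FF}_q},\sheaf{F}_I))^{(-1)^{i+1}}$, whose right-hand side is exactly $L_!(\sheaf{F}_I,t)$ by the computation recorded before the theorem. Hence $v(\sheaf{F}_I,t)=1$ for every $I$, and therefore $v(\sheaf{F},t)=1$.

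For part~(2) the corresponding input is the trace formula of Emerton and Kisin \cite{EmertonKisin}: over a finite $\Z_p$-algebra $\Order/I$ it shows that $v(\sheaf{F}_I,t)$ lies in $(\Order/I)\langle t\rangle=(\Order/I)[t]$ and is a unit there. (For instance, for $X=\mathbb{A}^1_{\FF_p}$ and the constant sheaf $\sheaf{F}=\Z_p$ one finds $L_!(\Z/p^n,t)=1$ and $L(\Z/p^n,t)=\sum_{k=0}^{n-1}p^kt^k$, which is indeed inverse to $1-pt$ in $(\Z/p^n)[t]$.) Now the powers of $\Jac(\Order)$ are cofinal among the open ideals of $\Order$ — every open ideal contains $\ell^m\Order$ for some $m$, and $\Jac(\Order)^k\subseteq\ell^m\Order$ as soon as $\Jac(\Order/\ell^m\Order)^k=0$, which holds for $k\gg 0$ since $\Order/\ell^m\Order$ is finite — so the inverses $v(\sheaf{F}_I,t)^{-1}\in(\Order/I)[t]$ assemble to an element of $\varprojlim_n\Order/\Jac(\Order)^n[t]=\Order\langle t\rangle$ inverse to $v(\sheaf{F},t)$. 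Thus $v(\sheaf{F},t)\in\Order\langle t\rangle^{\times}$.

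The concluding ``in particular'' is then immediate: we already know $L_!(\sheaf{F},t)\in\Order\{t\}^{\times}$, and $\Order\langle t\rangle^{\times}$ maps into $\Order\{t\}^{\times}$ by construction of the localisation $\Order\{t\}$, so in both cases $L(\sheaf{F},t)=v(\sheaf{F},t)\,L_!(\sheaf{F},t)\in\Order\{t\}^{\times}$. The serious content is entirely in the two external trace formulas; what is left for us is the limit bookkeeping, and its one delicate point — the interchange of reduction modulo $I$ with the formation of $L$ and $L_!$ — is precisely where the flat-sheaf axioms of Definition~\ref{def:flat R-sheaf} and the base-change/K\"unneth argument of Proposition~\ref{prop:perfectness} are needed.
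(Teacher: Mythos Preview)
Your argument for part~(1) is essentially the paper's own: reduce modulo the open ideals $I$, invoke the Grothendieck trace formula over finite coefficient rings (\cite[Fonctions $L$ mod $\ell^n$, Thm.~2.2]{SGA4h}), and pass to the projective limit. The compatibility checks you spell out are a welcome elaboration of the paper's one-line ``by passing to the projective limit''.

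For part~(2) the two arguments diverge, and yours has a small gap. You perform a reduction along the \emph{coefficient ring}, from $\Order$ to its finite quotients $\Order/I$, and then reassemble via $\varprojlim_n \Order/\Jac(\Order)^n[t]=\Order\langle t\rangle$. The paper instead cites \cite[Cor.~1.8]{EmertonKisin} directly for the full ring $\Order$ but only over the base field $\FF_p$, and then handles general $q=p^n$ by a \emph{base-field} reduction: viewing $X/\FF_q$ as a scheme over $\FF_p$, the resulting $v$ is obtained from the original by the substitution $t\mapsto t^n$, and $\Order\langle t\rangle^{\times}$ is visibly stable under that substitution. Your proposal never addresses the $q$ versus $p$ issue; since the Emerton--Kisin corollary as stated is for schemes over $\FF_p$ (your own illustrative example is over $\FF_p$), the case $q>p$ is not covered without this extra step or an equivalent Weil-restriction argument. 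Once that is supplied, your coefficient-ring limit is a valid alternative route --- though arguably superfluous, as the cited result already applies to general compact noetherian $\Z_p$-algebras.
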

\begin{proof}
Assertion $(1)$ follows from \cite[Fonctions $L$ mod $\ell$, Thm.~2.2]{SGA4h} by passing to the projective limit. Assertion $(2)$ for $q=p$ is \cite[Cor.~1.8]{EmertonKisin}. It remains true for $q=p^n$ because the $v$ for a scheme $X/\FF_q$ considered as a scheme over $\FF_p$ is obtained from the original $v$ by replacing $t$ by $t^n$.
\end{proof}

In fact, Theorem~\ref{thm:trace formula}.(1) remains valid for non-commutative coefficients \cite{Witte:NoncommutativeLFunctions}. The formulation of a reasonable non-commutative version of Theorem~\ref{thm:trace formula}.(2) poses additional technical difficulties related to the fact that for non-commutative $\Order$, the map $K_1(\Order\langle t \rangle)\mto K_1(\Order[[t]])$ might not be injective.

We use the above theorem to construct the true analogue of the Iwasawa power series of the classical $\ell$-adic $L$-function.

\begin{lemma}
Let $\gamma\in\Gamma\isomorph\Z_{\ell}$ be a topological generator, $\Order$ a compact, commutative, noetherian $\Z_{\ell}$-algebra. The assignment $t\mapsto \gamma^{-1}$ defines a ring homomorphism
$$
\Order\{t\}\mto\Lambda(\Gamma)_S.
$$
\end{lemma}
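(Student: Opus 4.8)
The plan is to factor the desired map as $\Order\{t\}\leftarrow\Order\langle t\rangle\to\Lambda(\Gamma)\to\Lambda(\Gamma)_S$. First I would construct a ring homomorphism $\Order\langle t\rangle\to\Lambda(\Gamma)$ sending $t$ to $\gamma^{-1}$. Note that $\Order\langle t\rangle=\varprojlim_n(\Order/\Jac(\Order)^n)[t]$ is precisely the $\Jac(\Order)$-adic completion of the polynomial ring $\Order[t]$. The tautological homomorphism $\Order[t]\to\Lambda(\Gamma)$, $t\mapsto\gamma^{-1}$, maps $\Jac(\Order)^n\Order[t]$ into $\Jac(\Order)^n\Lambda(\Gamma)$ and is therefore continuous for the $\Jac(\Order)$-adic topologies; since $\Lambda(\Gamma)=\varprojlim_n\Lambda(\Gamma)/\Jac(\Order)^n\Lambda(\Gamma)$ is complete for this topology, with quotients the Iwasawa algebras $\Lambda_{\Order/\Jac(\Order)^n}(\Gamma)$, the homomorphism extends uniquely to a ring homomorphism $\Order\langle t\rangle\to\Lambda(\Gamma)$. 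Explicitly it sends a restricted power series $\sum_i a_i t^i$ (with $a_i\to 0$) to the convergent sum $\sum_i a_i\gamma^{-i}$.

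Composing with the localisation $\Lambda(\Gamma)\to\Lambda(\Gamma)_S$, it remains to check that the resulting homomorphism $\Order\langle t\rangle\to\Lambda(\Gamma)_S$ carries every element which is a unit in $\Order[[t]]$ to a unit of $\Lambda(\Gamma)_S$; the factorisation through $\Order\{t\}$ is then forced by the universal property of the localisation $\Order\langle t\rangle\to\Order\{t\}$. Here I would first reduce to $\Order$ local: a compact commutative noetherian ring is a finite product of complete noetherian local rings with finite residue fields, and $\Order\langle t\rangle$, $\Order\{t\}$, $\Lambda(\Gamma)$, Venjakob's set $S$ and $\Lambda(\Gamma)_S$ all decompose into the corresponding products. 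So assume $\Order$ is local with maximal ideal $\mathfrak m$ and finite residue field $k=\Order/\mathfrak m$. Then Weierstrass preparation in $\Lambda(\Gamma)\cong\Order[[T]]$ shows that $\Lambda(\Gamma)/\Lambda(\Gamma)f$ is finitely generated over $\Order$ exactly when the image of $f$ in $k[[T]]=\Lambda(\Gamma)/\mathfrak m\Lambda(\Gamma)$ is nonzero; hence $S$ is the complement of the prime ideal $\mathfrak m\Lambda(\Gamma)$, so that $\Lambda(\Gamma)_S$ is the localisation at $\mathfrak m\Lambda(\Gamma)$ recalled in the text, and an element $x\in\Lambda(\Gamma)$ becomes a unit in $\Lambda(\Gamma)_S$ if and only if its image $\bar x$ in $k[[T]]$ is nonzero.

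Everything then comes down to the following: if $g\in\Order\langle t\rangle$ is a unit in $\Order[[t]]$, i.e.\ $g(0)\in\Order^\times$, then the image of $\sum_i a_i\gamma^{-i}$ in $k[[T]]$ is nonzero. Reducing modulo $\mathfrak m$, $g$ maps to a genuine polynomial $\bar g\in k[t]$ whose constant term $\overline{g(0)}$ is nonzero, so $\bar g\neq 0$; and $\gamma^{-1}$ maps to the non-constant power series $(1+T)^{-1}\in k[[T]]$. Since $k$ is algebraically closed in $k[[T]]$, the element $(1+T)^{-1}$ is transcendental over $k$, so $t\mapsto(1+T)^{-1}$ defines an injective ring homomorphism $k[t]\hookrightarrow k[[T]]$, and $\bar g\neq 0$ forces its image $\bar g\bigl((1+T)^{-1}\bigr)$ — which is precisely the image of $\sum_i a_i\gamma^{-i}$ in $k[[T]]$ — to be nonzero, as required.

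I expect the crux to be the middle step: identifying Venjakob's Ore set $S$ for $\Gamma$ with the complement of $\mathfrak m\Lambda(\Gamma)$, equivalently making the localisation $\Lambda(\Gamma)_S$ concrete. The construction of the map to $\Lambda(\Gamma)$ is a routine completion argument and the final verification is an elementary transcendence argument, but without a workable description of $\Lambda(\Gamma)_S$ one cannot recognise the images of the relevant elements as units. (If one simply invokes the description of $\Lambda(\Gamma)_S$ already given in the text, the argument collapses to the last two paragraphs.)
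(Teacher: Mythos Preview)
Your proof is correct and follows essentially the same route as the paper's: construct the map $\Order\langle t\rangle\to\Lambda(\Gamma)$, then verify that a unit $f\in\Order[[t]]^{\times}$ maps into $S$ by reducing modulo each maximal ideal of $\Order$ and observing that the image is a nonzero polynomial in $\gamma^{-1}$ (the paper phrases this as ``constant coefficient is a unit in $\Order/m$'' and leaves the transcendence step implicit). Your reduction to local $\Order$ and your explicit transcendence argument are just expansions of what the paper compresses into two sentences.
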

\begin{proof}
Clearly, $t\mapsto \gamma^{-1}$ defines a homomorphism $\Order\langle t\rangle\mto\Lambda(\Gamma)$. Let $f(t)\in \Order\langle t\rangle$ be a unit in $\Order[[t]]$. We need to show that $f(\gamma^{-1})\in S$.
The set $S$ consists of those elements in $\Lambda(\Gamma)$ whose reduction modulo every maximal ideal $m$ of $\Order$ is not zero. Now the reduction of $f(\gamma^{-1})$ modulo $m$ is a polynomial in $\gamma^{-1}$ whose constant coefficient is a unit in $\Order/m$.
\end{proof}

In particular, we may apply this lemma to $\Gamma=\Gal(\FF_{q^{\ell^{\infty}}}/\FF_q)$ with $\gamma$ being the image of the Frobenius $\Frob\in\Gal(\bar{\FF}_q/\FF_q)$. We then have elements
$$
L(\sheaf{F},\gamma^{-1}),\quad L_!(\sheaf{F},\gamma^{-1})\in \Lambda(\Gamma)_S^{\times}.
$$
for every flat $\Order$-sheaf $\sheaf{F}$ on $X$. Assume that $\ell\neq p$ and let $\kappa\colon \Gal(\bar{\FF}_q/\FF_q)\mto \Z_{\ell}^{\times}$ denote the cyclotomic character. For any Artin character $\rho\colon\fundG(X,\xi)\mto \GL_k(\Order_E)$ the elements $L(\sheaf{M}(\rho),\gamma^{-1})$ satisfy the interpolation property
$$
L(\sheaf{M}(\rho\kappa^n),1)=L^A(\rho,n).
$$
for  $n\in\Z$. (This holds also for the leading terms.)

For $\ell=p$ one should not expect to obtain an analogous result, for in the $p$-adic world, the Tate twist of the motive defined by $\rho$ does not correspond to a continuous character $\kappa$, but takes on a very different shape. However, we may still view $L(\sheaf{F},\gamma^{-1})$ as an interpolation of the leading terms of $L(\sheaf{M}(\rho)\tensor_{\Z_{\ell}}\sheaf{F},t)$ at $t=1$.

\subsection{The Main Conjecture}

We are now ready to formulate the non-commutative main conjecture of Iwasawa theory in the geometric case. For any profinite group $G$, let $\Cr(G)$ denote the set of continuous representations $\rho\colon G\mto \GL_k(\Order_{\rho})$ for the valution ring $\Order_{\rho}$ of a finite field extension of $\Q_p$. Likewise, we let $\Ar(G)$ denote the set of all Artin representations $\rho\colon G\mto \GL_k(\Order_{\rho})$, i.\,e.\ representations with finite image.

\begin{theorem}[\cite{Witte:MCVarFF}]\label{thm:mythm}
Fix any prime $\ell$. Let $f\colon Y\mto X$ be an admissible $\ell$-adic Lie extension of a geometrically connected scheme $X$ of finite type over $\FF_q$. Set $G=\Gal(Y/X)$. For any flat $\Z_{\ell}$-sheaf $\sheaf{F}$, the cohomology of $C(Y/X,\sheaf{F})$ is $S$-torsion and there exists an element $\zeta_!(\sheaf{F})\in K_1(\Lambda_{\Z_{\ell}}(G)_S)$ such that
\begin{enumerate}
\item $\del \zeta_!(\sheaf{F})=[C(Y/X,\sheaf{F})]$ in $K_0(\Lambda_{\Z_{\ell}}(G),\Lambda_{\Z_{\ell}}(G)_S)$,
\item $\Phi_\rho(\zeta_!(\sheaf{F}))=L_!(\sheaf{M}(\rho)\tensor_{\Z_{\ell}}\sheaf{F},\gamma^{-1})$ in $\Lambda_{\Order_{\rho}}(\Gamma)_S^{\times}$ for any continuous representation $\rho\in \Cr(G)$.
\end{enumerate}
\end{theorem}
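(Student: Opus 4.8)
The plan is to realise $\zeta_!(\sheaf{F})$ as the class in $K_1$ of the endomorphism $1-\Frob$ on a perfect complex that computes the compactly supported cohomology of $X$ over $\bar\FF_q$ — in effect, as the \emph{noncommutative $L_!$-function} of the $\Lambda(G)$-sheaf $\sheaf{F}_G$ — and to deduce both assertions from this description together with the definition of $L_!$.

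\emph{Step 1: a fibre description of $C(Y/X,\sheaf{F})$.} Write $s\colon X\mto\Spec\FF_q$ for the structure morphism and put $\cmplx{N}=\Sect_{\et}(\bar\FF_q,\RDer s_!\sheaf{F}_G)$, the geometric stalk of the complex $\RDer s_!\sheaf{F}_G$ on $\Spec\FF_q$. Running the argument of Proposition~\ref{prop:perfectness} over the base $\bar\FF_q$ shows that $\cmplx{N}$ is a perfect complex of $\Lambda_{\Z_\ell}(G)$-modules carrying a $\Lambda_{\Z_\ell}(G)$-linear action of the geometric Frobenius $\Frob\in\Gal(\bar\FF_q/\FF_q)$. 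Since $\Spec\FF_q$ has étale cohomological dimension one, $\RDer\Sect_{\et}(\Spec\FF_q,-)$ is computed by the two-term Frobenius complex, so applying $\RDer\Sectc(X,-)=\Sect_{\et}(\Spec\FF_q,\RDer s_!(-))$ gives a quasi-isomorphism
$$
 C(Y/X,\sheaf{F})\qiso\Cone\bigl(\cmplx{N}\xrightarrow{\ 1-\Frob\ }\cmplx{N}\bigr)[-1].
$$
The Künneth formula for the trivial product $X\times_{\FF_q}\Spec\bar\FF_q$ — the same formula used throughout the previous section — identifies $(\cmplx{N},\Frob)$ in terms of the compactly supported cohomology of $\bar X=X\times_{\FF_q}\Spec\bar\FF_q$ and its geometric Frobenius; concretely, after base change along $\Lambda(G)\to\Lambda(\Gamma)$ and replacement by a strictly perfect complex via Lemma~\ref{lem:replacing quasiautomorphisms}, it writes $\cmplx{N}\isomorph\Lambda(\Gamma)\ctensor_{\Z_\ell}\Sect_{\et}(\bar\FF_q,\RDer s_!\sheaf{F})$ with $\Frob$ acting as multiplication by a fixed topological generator $\gamma$ of $\Gamma$ tensored with the geometric Frobenius $\varphi$, the latter an \emph{automorphism} because it acts invertibly on the finitely generated $\ell$-adic cohomology of the variety $\bar X$.

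\emph{Step 2: $S$-torsion and the construction of $\zeta_!$.} The crux is that $C(Y/X,\sheaf{F})$ is acyclic over $\Lambda_{\Z_\ell}(G)_S$, equivalently that $1-\Frob$ becomes a quasi-automorphism there. By the standard reduction of $S$-torsion questions for a perfect complex to its derived cyclotomic quotient $\Lambda(\Gamma)\tensor^{\LDer}_{\Lambda(G)}C(Y/X,\sheaf{F})$, it suffices to check this after base change to $\Lambda_{\Z_\ell}(\Gamma)_S$; there, by the argument used in the proof of the lemma preceding this theorem, one is reduced to the residue field $\FF_\ell((\gamma-1))$ of $\Lambda_{\Z_\ell}(\Gamma)_S$, over which $1-\gamma^{\pm1}\varphi$ is invertible since $\det(1-t\varphi)$ is a nonzero polynomial with constant term $1$ while $\gamma-1$ is transcendental over $\FF_\ell$. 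This proves the first assertion. We may therefore define $\zeta_!(\sheaf{F})$ to be the class of $1-\Frob$ (or of its inverse) in $K_1(\Lambda_{\Z_\ell}(G)_S)$ as permitted by part~(2) of Theorem~\ref{thm:rep of K_1}; the sign is fixed by property~(1), which is then immediate from $\del[f]=-[\cmplx{\Cone(f)}]$ and the identity of Step~1, a shift by $[-1]$ negating classes in $K_0$.

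\emph{Step 3: the interpolation property.} We unwind $\Phi_\rho=\det\comp\mathrm{pr}\comp\mathrm{tw}_\rho$ on $\zeta_!(\sheaf{F})$. The twist $\mathrm{tw}_\rho$ replaces $\sheaf{F}$ by $\sheaf{M}(\rho)\tensor_{\Z_\ell}\sheaf{F}$, using $\rho\tensor^{\LDer}_{\Z_\ell}C(Y/X,\sheaf{F})\qiso C(Y/X,\sheaf{M}(\rho)\tensor_{\Z_\ell}\sheaf{F})$ and its analogue for $\cmplx{N}$; the projection $\mathrm{pr}$ is induced by $\Lambda(G)\to\Lambda(\Gamma)$, which by $\Lambda(\Gamma)\tensor^{\LDer}_{\Lambda(G)}C(Y/X,-)\qiso C(X\times_{\FF_q}\FF_{q^{\ell^\infty}}/X,-)$ passes to the cyclotomic level, where the description of Step~1 applies with $\sheaf{F}$ replaced by $\sheaf{M}(\rho)\tensor_{\Z_\ell}\sheaf{F}$. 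Evaluating $\det$ through the product expansion $L_!(\sheaf{G},t)=\prod_n\det[1-t\varphi\colon\Order[[t]]\tensor_\Order Q^n]^{(-1)^{n+1}}$ recalled above, and the ring homomorphism $t\mapsto\gamma^{-1}$ of the lemma preceding this theorem, then yields exactly $L_!(\sheaf{M}(\rho)\tensor_{\Z_\ell}\sheaf{F},\gamma^{-1})$; the substitution $t\mapsto\gamma^{-1}$ is precisely what reconciles the arithmetic Frobenius implicit in the fibre complex of Step~1 with the geometric Frobenius appearing in $L_!$.

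\emph{Main obstacle.} The substance is in Step~2: proving that the Iwasawa-theoretic compactly supported cohomology is $S$-torsion, i.e.\ that $1-\Frob$ is invertible over $\Lambda_{\Z_\ell}(G)_S$. This rests on the geometric input that the geometric Frobenius acts as an automorphism on the finitely generated $\ell$-adic cohomology of $\bar X$, together with the perfectness statement of Proposition~\ref{prop:perfectness}. The remaining work is bookkeeping — the shift in Step~1, the passages through the two Künneth formulae, the reduction to the cyclotomic quotient, and the Frobenius/sign conventions encoded in the substitution $t\mapsto\gamma^{-1}$.
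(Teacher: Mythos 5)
Your Steps~1 and~3 reproduce the paper's construction almost exactly: $\zeta_!(\sheaf{F})$ is realised (up to a sign that you handle correctly via the shift $[-1]$) as the class of $\id-\Frob$ on the fibre complex $\cmplx{N}=\overline{C}(Y/X,\sheaf{F})=\RDer\Sectc(\overline{X},\sheaf{F}_G)$, the boundary relation follows from Theorem~\ref{thm:rep of K_1}, and the interpolation follows from compatibility with derived tensor products and the commutative diagram~(\ref{eqn:the diagram}). That part is fine.

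The gap is in Step~2. You assert a ``standard reduction of $S$-torsion questions for a perfect complex to its derived cyclotomic quotient,'' i.\,e.\ that a perfect complex $\cmplx{P}$ over $\Lambda(G)$ is $\Lambda(G)_S$-acyclic as soon as $\Lambda(\Gamma)\tensor^{\LDer}_{\Lambda(G)}\cmplx{P}$ is $\Lambda(\Gamma)_S$-acyclic. This is false in general. Take $\ell=3$, $\Delta=\Z/2\Z$, $G=\Delta\times\Gamma$, $H=\Delta$. Then $\Lambda(G)\isomorph\Z_3[[T]]\times\Z_3[[T]]$ via the idempotents of $\Z_3[\Delta]$, and the augmentation ideal $I_H$ of $H$ is the direct summand $0\times\Z_3[[T]]$. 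Put $\cmplx{P}=I_H$, a finitely generated projective (hence strictly perfect) $\Lambda(G)$-module. Its base change $\Lambda(\Gamma)\tensor^{\LDer}_{\Lambda(G)}\cmplx{P}$ is $I_H/I_H^2=0$ (no higher Tor, since $\cmplx{P}$ is projective), which is certainly $\Lambda(\Gamma)_S$-acyclic. Yet $I_H$ is \emph{not} $S$-torsion: as a $\Lambda(H)=\Z_3\times\Z_3$-module it is $0\times\Z_3[[T]]$, which is not finitely generated. So your abstract reduction does not prove the torsion claim, and this is precisely the point that carries the content of the theorem. What the paper does instead is reduce only to the case of \emph{finite} $H$ (quotienting by open $U\lhd G$, $U\subset H$), and then it identifies $\HF^{i+1}(C(Y/X,\sheaf{F}))$ directly with the finitely generated $\Z_\ell$-module $\HF^i_{\mathrm{c}}(Y,\sheaf{F})$ via the Hochschild--Serre spectral sequence, using that $\Gal(\bar\FF_q/\FF_{q^{\ell^{\infty}}})$ has order prime to $\ell$ and that $\HF^i_{\mathrm{c}}(\overline{X},\sheaf{F})$ is finitely generated. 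Your transcendence argument over the residue field $\FF_\ell((\gamma-1))$ of $\Lambda(\Gamma)_S$ is a valid and rather slick way to handle the $G=\Gamma$ case, but it does not bootstrap to general $G$ on its own.

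One further, minor, point: your parenthetical justification that the geometric Frobenius $\varphi$ is ``an automorphism because it acts invertibly on the finitely generated $\ell$-adic cohomology of $\bar X$'' is not correct when $\ell=p$ (for instance $\varphi$ acts by $q$ on $\HF^2_{\mathrm{c}}(\bar{\mathbb{A}}^1_{\bar\FF_q},\Z_p)\isomorph\Z_p(-1)$, and $q$ is not a $p$-adic unit), and the theorem is stated for arbitrary $\ell$ including $\ell=p$. Fortunately your residue-field argument only uses that $\det(1-t\varphi)$ has constant term $1$, not invertibility of $\varphi$, so the remark is a harmless red herring rather than an additional gap.
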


Recall that for $\ell\neq p$, we have $L_!(\sheaf{F},t)=L(\sheaf{F},t)$ in the commutative situation. Thus, in this case, the above result is indeed a full analogue of Theorem~\ref{thm:mc number field}. Note that we do not need to assume a $\mu=0$-conjecture. We can directly prove the much stronger $S$-torsion property.

In fact, the result in \cite{Witte:MCVarFF} is more general than the one stated above. By replacing the fundamental group by an arbitrary principal covering, one may also deal with nonconnected schemes. Moreover, the theorem still holds if $G$ is no longer an $\ell$-adic Lie group, but topologically finitely generated and virtually pro-$\ell$. This applies for example to the maximal pro-$\ell$-quotient of the fundamental group $\fundG(X,\xi)$. We may also replace $\Z_\ell$ by more general coefficient rings, for example, the valuation ring of any finite field extension of $\Q_\ell$.

For $\ell=p$, Burns proves the following result:

\begin{theorem}[\cite{Burns:MCinGIwTh+RelConj}]\label{thm:Burnsthm}
Let $p\neq 2$. Let $f\colon Y\mto X$ be an admissible $p$-adic Lie extension of a geometrically connected scheme $X$ of finite type over $\FF_q$. Set $G=\Gal(Y/X)$. For any flat $\Z_{p}$-sheaf $\sheaf{F}$, there exists an element $\zeta(\sheaf{F})\in K_1(\Lambda_{\Z_p}(G)_S)$ such that
\begin{enumerate}
\item $\del \zeta(\sheaf{F})=[C(Y/X,\sheaf{F})]$ in $K_0(\Lambda_{\Z_p}(G),\Lambda_{\Z_p}(G)_S)$,
\item $\Phi_\rho(\zeta(\sheaf{F}))=L(\sheaf{M}(\rho)\tensor_{\Z_{p}}\sheaf{F},\gamma^{-1})$ in $\Lambda_{\Order_{\rho}}(\Gamma)_S^{\times}$ for any Artin representation $\rho\in \Ar(G)$.
\end{enumerate}
\end{theorem}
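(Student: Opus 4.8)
The plan is to deduce Theorem~\ref{thm:Burnsthm} from Witte's Theorem~\ref{thm:mythm}, whose hypotheses are satisfied for $\ell=p$ as well and which already provides an element $\zeta_!(\sheaf{F})\in K_1(\Lambda_{\Z_p}(G)_S)$ with $\del\zeta_!(\sheaf{F})=[C(Y/X,\sheaf{F})]$ and $\Phi_\rho(\zeta_!(\sheaf{F}))=L_!(\sheaf{M}(\rho)\tensor_{\Z_p}\sheaf{F},\gamma^{-1})$ for every $\rho\in\Cr(G)$. Since $L(\sheaf{G},\gamma^{-1})=v(\sheaf{G},\gamma^{-1})\,L_!(\sheaf{G},\gamma^{-1})$ for every flat sheaf $\sheaf{G}$ over a commutative coefficient ring, it suffices to construct a correction term $\epsilon(\sheaf{F})$ lying in the \emph{unlocalised} group $K_1(\Lambda_{\Z_p}(G))=\ker\del$ and satisfying $\Phi_\rho(\epsilon(\sheaf{F}))=v(\sheaf{M}(\rho)\tensor_{\Z_p}\sheaf{F},\gamma^{-1})$ for all $\rho\in\Ar(G)$; one then puts $\zeta(\sheaf{F})=\epsilon(\sheaf{F})\,\zeta_!(\sheaf{F})$, which satisfies $\del\zeta(\sheaf{F})=\del\zeta_!(\sheaf{F})=[C(Y/X,\sheaf{F})]$ and has the prescribed values, using that the localisation sequence for $S\subset\Lambda_{\Z_p}(G)$ is split exact \cite{Witte:Splitting}.

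A direct construction of $\epsilon(\sheaf{F})$ as the image under $t\mapsto\gamma^{-1}$ of a ``noncommutative $v$-invariant'' of the $\Lambda_{\Z_p}(G)$-sheaf $\sheaf{F}_G$ founders on the difficulty noted after Theorem~\ref{thm:trace formula}: over noncommutative coefficients the comparison map on $K_1$ of the relevant power series rings need not be injective, so no such invariant is available a priori. Instead I would argue one Artin representation at a time. Fix $\rho\in\Ar(G)$ with coefficient ring $\Order_\rho$; then $\sheaf{M}(\rho)\tensor_{\Z_p}\sheaf{F}$ is a flat $\Order_\rho$-sheaf on $X$, the ring $\Order_\rho$ is commutative and noetherian, and Theorem~\ref{thm:trace formula}.(2) gives $v(\sheaf{M}(\rho)\tensor_{\Z_p}\sheaf{F},t)\in\Order_\rho\langle t\rangle^{\times}$. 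Substituting $t\mapsto\gamma^{-1}$ turns this into a \emph{genuine} unit $v_\rho:=v(\sheaf{M}(\rho)\tensor_{\Z_p}\sheaf{F},\gamma^{-1})\in\Lambda_{\Order_\rho}(\Gamma)^{\times}$, not merely an $S$-unit. (Equivalently, at the $\Gamma$-level the commutative case of Theorem~\ref{thm:Burnsthm}, namely for the trivial extension $X\times_{\FF_q}\FF_{q^{p^{\infty}}}$ with arbitrary coefficients, follows at once by combining Theorem~\ref{thm:mythm} with Theorem~\ref{thm:trace formula}.(2); the substance of Theorem~\ref{thm:Burnsthm} is the descent of this statement from $\Gamma$ to $G$.) This produces the family $(v_\rho)_{\rho\in\Ar(G)}$.

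It remains to glue this family into a single element of $K_1(\Lambda_{\Z_p}(G))$. For $p\neq 2$ one has a description of the image of $K'_1(\Lambda_{\Z_p}(G))$, respectively $K'_1(\Lambda_{\Z_p}(G)_S)$, inside $\prod_{\rho}\Lambda_{\Order_\rho}(\Gamma)^{\times}$, respectively $\prod_{\rho}\Lambda_{\Order_\rho}(\Gamma)_S^{\times}$, by explicit congruences and norm relations, in the spirit of the integral logarithm of Oliver--Taylor and of the work of Ritter--Weiss and Kakde; it is exactly this $K$-theoretic input that forces the hypothesis $p\neq2$, although no such hypothesis is needed in Theorem~\ref{thm:mythm}. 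The class $\zeta_!(\sheaf{F})$ contributes an admissible family automatically, being the image of an actual $K_1$-element, so everything comes down to checking that the family $(v_\rho)_\rho$ satisfies these congruences. This is a statement purely about the Emerton--Kisin factors $v$, and I expect it to be the main obstacle; I would establish it by exploiting that $v$ is multiplicative in $\rho$ and compatible with induction and restriction along the finite quotients of $G$ — functorialities inherited from the cohomological and Euler-product descriptions of the $L$-functions whose ratio defines $v$.

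Granting this congruence check, the characterisation of the image yields $\zeta(\sheaf{F})$ first in $K'_1(\Lambda_{\Z_p}(G)_S)$ with $\Phi_\rho(\zeta(\sheaf{F}))=v_\rho\,\Phi_\rho(\zeta_!(\sheaf{F}))=L(\sheaf{M}(\rho)\tensor_{\Z_p}\sheaf{F},\gamma^{-1})$ for all $\rho\in\Ar(G)$, and with the same class in $K'_1$ as $\zeta_!(\sheaf F)$ after applying $\del$. Since $\Phi_\rho$ annihilates $SK_1(\Lambda_{\Z_p}(G))$ for Artin $\rho$ and $\del$ factors through $K'_1$, one may lift $\zeta(\sheaf{F})$ to $K_1(\Lambda_{\Z_p}(G)_S)$ and, if necessary, adjust it by an element of $SK_1(\Lambda_{\Z_p}(G))\subset\ker\del$ so that $\del\zeta(\sheaf{F})=[C(Y/X,\sheaf{F})]$ holds exactly; this produces an element with the two asserted properties.
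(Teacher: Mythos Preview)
Your reduction to constructing a correction term $\epsilon(\sheaf{F})$ in the unlocalised $K_1$ with $\Phi_\rho(\epsilon(\sheaf{F}))=v(\sheaf{M}(\rho)\tensor_{\Z_p}\sheaf{F},\gamma^{-1})$ for all Artin $\rho$, and then setting $\zeta(\sheaf{F})=\epsilon(\sheaf{F})\,\zeta_!(\sheaf{F})$, is exactly the paper's Proposition~\ref{prop:reformulation}. The divergence from the paper is in how $\epsilon(\sheaf{F})$ is produced, and this is where your proposal has a genuine gap.

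You dismiss the noncommutative $v$-invariant because of the injectivity problem for $K_1(\Lambda(G)\langle t\rangle)\to K_1(\Lambda(G)[[t]])$, and propose instead to assemble the commutative family $(v_\rho)_\rho$ and check the Kakde-style congruences by hand. But the functorialities you invoke---multiplicativity in $\rho$, compatibility with induction and restriction---at best address conditions of the type $(M1)$--$(M2)$; they say nothing about the trace-and-transfer congruences $(M3)$--$(M4)$, which demand statements of the shape $\operatorname{ver}^V_U(f_V)-f_U\in\operatorname{im}\sigma_U^V$. These are not formal consequences of the listed functorialities, and you leave them open.

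The paper's point is that one should \emph{not} discard the noncommutative invariant. One defines $v(\sheaf{F}_G,t)\in K_1(\Lambda(G)[[t]])$ as an explicit difference of Frobenius classes; because this is an honest $K_1$-element, its image under $\theta_t$ in $\prod_{U\in\Sg(G,Z)}\Lambda(U^{\ab})[[t]]^{\times}$ automatically satisfies $(M1)$--$(M4)$. By Emerton--Kisin applied to each abelian $U^{\ab}$, every component in fact lies in $\Lambda(U^{\ab})\langle t\rangle^{\times}$, so componentwise evaluation at $t=1$ is legitimate. The only thing left to check (Proposition~\ref{prop:evaluation}) is that evaluation at $t=1$ \emph{preserves} the congruences, which is an elementary inspection of how $\operatorname{ver}$, $\sigma_U^V$, etc.\ interact with $t\mapsto 1$ under the identification $\Lambda(G)[[t]]\cong\Lambda(G\times\Z_p)$. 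This yields a well-defined map $\varepsilon\colon K_1^{\diamond}(\Lambda(G)[[t]])\to K_1'(\Lambda(G))$, and one sets $\nu(\sheaf{F})=\varepsilon(v(\sheaf{F}_G,t))$. Thus the paper never verifies the congruences for the $v$'s from scratch: it inherits them for free from the $K_1$-class and only checks that they survive specialisation---precisely the step you thought was blocked.
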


Again, the restriction to geometrically connected schemes and to $\ell$-adic Lie groups may easily be lifted. However, it is not clear that $\zeta(\sheaf{F})$ satisfies the interpolation property with respect to all continuous representations.  The exclusion of the prime $2$ in the statement is a purely technical restriction. There is no reason to expect any odd behaviour for the even prime. Also for technical reasons, we may not replace $\Z_p$ by the valuation ring of any finite extension of $\Q_p$. At present, we can only deal with unramified extensions.

\section{Sketch of Proofs}\label{sec:Proofs}

We will give a fairly detailed sketch of the proofs of Theorem~\ref{thm:mythm} and Theorem~\ref{thm:Burnsthm} below. In some details we will deviate from the original proofs in \cite{Witte:MCVarFF} and \cite{Burns:MCinGIwTh+RelConj}, but the general ideas remain the same.

\subsection{On the Proof of Theorem~\ref{thm:mythm}}

We begin by assuming that we already know that $C(Y/X,\sheaf{F})$ is $S$-torsion. The proof of the existence of the non-commu\-ta\-tive $L$-function $\zeta_!(\sheaf{F})$ is very different from the number field case: We are in the happy position to be able to give an  explicit construction of a hot candidate for $\zeta_!(\sheaf{F})$. The proof then boils down to verifying that this candidate does indeed satisfy the required interpolation property.

Let $\overline{X}$ be the base change of $X$ to the  algebraic closure $\bar{\FF}_q$ of $\FF_q$ and set
$$
\overline{C}(Y/X,\sheaf{F})=\RDer\Sectc(\overline{X},\sheaf{F}_G).
$$
The geometric Frobenius $\Frob\in \Gal(\bar{\FF}_q/\FF_q)$ acts on this complex and from the Hochschild-Serre spectral sequence it follows that there exists a distinguished triangle
$$
\overline{C}(Y/X,\sheaf{F})\xrightarrow{\id-\Frob}\overline{C}(Y/X,\sheaf{F})\mto C(Y/X,\sheaf{F}).
$$
Since $C(Y/X,\sheaf{F})$ is $S$-torsion we see by Theorem~\ref{thm:rep of K_1} that $[\id-\Frob]$ is a class in $K_1(\Lambda(G)_S)$. We take the inverse of it as our definition of $\zeta_!(\sheaf{F})$. Since $C(Y/X,\sheaf{F})$ is the cone of $\id-\Frob$ it follows again by Theorem~\ref{thm:rep of K_1} that $\del\zeta_!(\sheaf{F})=[C(Y/X,\sheaf{F})]$.

The construction of $[\id-\Frob]$ is compatible with taking derived tensor products. In particular, we have
$$
\Phi_{\rho}(\zeta_!(\sheaf{F}))=\zeta_!(\sheaf{M}(\rho)\tensor_{\Z_{\ell}}\sheaf{F})\in K_1(\Lambda(\Gamma)_S)
$$
for any continuous representation $\rho\in\Cr(G)$. It remains to verify that $\zeta_!(\sheaf{F})=L_!(\sheaf{F},\gamma^{-1})$ for any flat $\Z_{\ell}$-sheaf $\sheaf{F}$ and the cyclotomic $\Z_{\ell}$-extension $X^{\cyc}/X$. This follows from the commutativity of the diagram
\begin{equation}\label{eqn:the diagram}
\xymatrix{
\Lambda(\Gamma)\tensor_{\Z_{\ell}}^{\LDer}\RDer\Sectc(\overline{X},\sheaf{F})\ar[r]^{\gamma^{-1}\tensor \Frob}\ar[d]^{\eta}&
\Lambda(\Gamma)\tensor_{\Z_{\ell}}^{\LDer}\RDer\Sectc(\overline{X},\sheaf{F})\ar[d]^{\eta}\\
\RDer\Sectc(\overline{X},\sheaf{F}_{\Gamma})\ar[r]^{\Frob}&\RDer\Sectc(\overline{X},\sheaf{F}_{\Gamma})
}
\end{equation}
in the derived category of complexes of $\Lambda(\Gamma)$-modules, with $\eta$ denoting the canonical quasi-isomorphism.

We will now give a sketch of the proof of the $S$-torsion property. As before, let $H=\ker(G\mto \Gamma)$. The general case is easily reduced to the case that $H$ is finite by considering quotients by open subgroups of $H$ which are normal in $G$. From now on, we assume that $H$ is finite. In particular, $Y$ may be viewed as a scheme of finite type over $\FF_{q^{\ell^{\infty}}}$. Moreover, in this situation, the cohomology groups of $C(Y/X,\sheaf{F})$ are $S$-torsion precisely if they are finitely generated as $\Z_{\ell}$-modules. Hence, the $S$-torsion property follows from

\begin{proposition}
Assume that $H$ is finite. For each integer $i$,
$$
\HF^{i+1}(C(Y/X,\sheaf{F}))\isomorph\HF_{\mathrm{c}}^{i}(Y,\sheaf{F})\isomorph
\HF_{\mathrm{c}}^{i}(Y\times_{\FF_{q^{\ell^{\infty}}}}\bar{\FF}_q,\sheaf{F})^{\Gal(\bar{\FF}_q/\FF_{q^{\ell^{\infty}}})}.
$$
In particular, the cohomology groups of $C(Y/X,\sheaf{F})$ are finitely generated as $\Z_{\ell}$-modules.
\end{proposition}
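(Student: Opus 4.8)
The plan is to reduce all three isomorphisms to an explicit analysis of the two-term complex $[\,\overline{C}(Y/X,\sheaf{F})\xrightarrow{\id-\Frob}\overline{C}(Y/X,\sheaf{F})\,]$ from the preceding paragraph. Write $\overline{Y}=Y\times_{\FF_{q^{\ell^{\infty}}}}\bar{\FF}_q$. I would first dispose of the two rightmost groups, which are the easy ones. By the argument establishing Proposition~\ref{prop:perfectness} (over $\bar{\FF}_q$ and with trivial Galois group), $D:=\RDer\Sectc(\overline{Y},\sheaf{F})$ is a perfect complex of $\Z_{\ell}$-modules, so each $\HF_{\mathrm{c}}^{i}(\overline{Y},\sheaf{F})$ is finitely generated over $\Z_{\ell}$. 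Since $\Gal(\bar{\FF}_q/\FF_{q^{\ell^{\infty}}})$ is the maximal pro-(prime-to-$\ell$) quotient of $\hat{\Z}$, it has $\ell$-cohomological dimension $0$, so the Hochschild--Serre spectral sequence for the pro-Galois cover $\overline{Y}\mto Y$ collapses and yields $\HF_{\mathrm{c}}^{i}(Y,\sheaf{F})\isomorph\HF_{\mathrm{c}}^{i}(\overline{Y},\sheaf{F})^{\Gal(\bar{\FF}_q/\FF_{q^{\ell^{\infty}}})}$; as this group acts on the finitely generated $\Z_{\ell}$-module $\HF_{\mathrm{c}}^{i}(\overline{Y},\sheaf{F})$ through a finite quotient of order prime to $\ell$, the invariants form a direct summand and are again finitely generated.

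For the first isomorphism I would make $\overline{C}(Y/X,\sheaf{F})$ explicit. Since $H$ is finite, $Y\mto X\times_{\FF_q}\FF_{q^{\ell^{\infty}}}$ is finite, so the associated direct image functor is exact and composes into $\RDer\Sectc$; combining this with the decomposition $\FF_{q^{\ell^{m}}}\tensor_{\FF_q}\bar{\FF}_q\isomorph\prod_{\Z/\ell^{m}\Z}\bar{\FF}_q$ and transporting the argument behind diagram~\eqref{eqn:the diagram} and Proposition~\ref{prop:perfectness} from the cyclotomic $\Z_{\ell}$-extension to the present setting, I expect a $\Frob$-equivariant quasi-isomorphism
$$\overline{C}(Y/X,\sheaf{F})\qiso\Lambda(\Gamma)\tensor^{\LDer}_{\Z_{\ell}}D,$$
the geometric Frobenius acting on the right as $\gamma^{-1}\tensor\Frob$, where $\gamma\in\Gamma$ is the image of $\Frob$ and acts on $\Lambda(\Gamma)$ by multiplication. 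Feeding this into the distinguished triangle and using that $\Lambda(\Gamma)$ is flat over $\Z_{\ell}$, the long exact cohomology sequence exhibits $\HF^{i+1}(C(Y/X,\sheaf{F}))$ as an extension of the kernel of $1-\gamma^{-1}\tensor\Frob$ on $\Lambda(\Gamma)\tensor_{\Z_{\ell}}\HF_{\mathrm{c}}^{i+1}(\overline{Y},\sheaf{F})$ by the cokernel of $1-\gamma^{-1}\tensor\Frob$ on $\Lambda(\Gamma)\tensor_{\Z_{\ell}}\HF_{\mathrm{c}}^{i}(\overline{Y},\sheaf{F})$.

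Two points then remain. First, the kernel vanishes: identifying $\Lambda(\Gamma)$ with $\Z_{\ell}[[T]]$ and multiplying by the unit $1+T$, the operator becomes $(1+T)-\Frob$, whose determinant is the characteristic polynomial of $\Frob$ evaluated at $1+T$, a non-zero-divisor in $\Z_{\ell}[[T]]$ (its reduction modulo $\ell$ has unit leading coefficient), so the adjugate identity forces injectivity. Secondly, the cokernel must be identified with $N^{\Gal(\bar{\FF}_q/\FF_{q^{\ell^{\infty}}})}$, where $N=\HF_{\mathrm{c}}^{i}(\overline{Y},\sheaf{F})$. Let $\Delta$ be the finite prime-to-$\ell$ quotient of $\Gal(\bar{\FF}_q/\FF_{q^{\ell^{\infty}}})$ through which it acts on $N$. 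The idempotent $e_{\Delta}=|\Delta|^{-1}\sum_{\delta\in\Delta}\delta$ gives a $\Frob$-stable splitting $N=e_{\Delta}N\oplus(1-e_{\Delta})N$ with $e_{\Delta}N=N^{\Gal(\bar{\FF}_q/\FF_{q^{\ell^{\infty}}})}$. On $(1-e_{\Delta})N$ the operator $1-\gamma^{-1}\tensor\Frob$ reduces modulo the maximal ideal $(\ell,T)$ of $\Lambda(\Gamma)$ to $1-\Frob$ on $(1-e_{\Delta})N\tensor\FF_{\ell}$, which is invertible because any $\Frob$-fixed vector there is $\Delta$-fixed, and the $\Delta$-invariants of $(1-e_{\Delta})N\tensor\FF_{\ell}$ vanish by construction of $e_{\Delta}$; Nakayama's lemma then shows $1-\gamma^{-1}\tensor\Frob$ is surjective, so this summand contributes nothing. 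On $e_{\Delta}N$, the Frobenius acts through a pro-$\ell$ quotient, so $\Frob-1$ is topologically nilpotent, and a direct power-series computation — equivalently, the observation that, after an untwisting isomorphism, the two-term complex computes $\RDer\Hom_{\Lambda(\Gamma)}(\Z_{\ell},\Lambda(\Gamma)\tensor_{\Z_{\ell}}e_{\Delta}N)$, which is concentrated in degree $1$ and equal to $e_{\Delta}N$ — identifies the cokernel with $e_{\Delta}N$. Assembling the two summands gives $\HF^{i+1}(C(Y/X,\sheaf{F}))\isomorph N^{\Gal(\bar{\FF}_q/\FF_{q^{\ell^{\infty}}})}=\HF_{\mathrm{c}}^{i}(Y,\sheaf{F})$.

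The hard part is precisely this last identification of the cokernel on the $e_{\Delta}$-summand. On the complementary summand — where the prime-to-$\ell$ Frobenius has no non-zero fixed vector modulo $\ell$ — the cokernel vanishes outright, so the entire content of the degree shift lies in the interplay between $\Gamma$ and the pro-$\ell$ part of the Frobenius action, where one must verify that the cokernel is the full module $e_{\Delta}N$ and not a proper quotient such as $e_{\Delta}N/(\Frob-1)$.
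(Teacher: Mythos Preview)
Your argument is essentially correct, but it follows a genuinely different route from the paper's.

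The paper does not touch the distinguished triangle $\overline{C}\xrightarrow{\id-\Frob}\overline{C}\to C$ at all in this proof. Instead, after the same Hochschild--Serre argument you give for the second isomorphism, it first replaces $X$ by a finite layer so that $Y$ becomes the cyclotomic $\Z_\ell$-extension itself and $\overline{Y}=\overline{X}$; this manoeuvre eliminates your prime-to-$\ell$ group $\Delta$ entirely. For the first isomorphism it then uses the description
\[
\HF^i(C(Y/X,\sheaf{F}))\isomorph\varprojlim_V \HF^i_{\mathrm{c}}(V,\sheaf{F})
\]
over the finite layers $V=X_n$, together with the Hochschild--Serre short exact sequences
\[
0\to \HF^i_{\mathrm{c}}(Y,\sheaf{F})_{\Gamma^{\ell^n}}\to \HF^{i+1}_{\mathrm{c}}(X_n,\sheaf{F})\to \HF^{i+1}_{\mathrm{c}}(Y,\sheaf{F})^{\Gamma^{\ell^n}}\to 0.
\]
The rightmost terms stabilise (noetherianity of $\Z_\ell$), so their inverse limit under the norm maps vanishes; the leftmost terms have inverse limit equal to the compactification of the $\Lambda(\Gamma)$-module $\HF^i_{\mathrm{c}}(Y,\sheaf{F})$, which is already compact because it is finitely generated over $\Z_\ell$. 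This yields the degree shift with no explicit Frobenius linear algebra.

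What each approach buys: the paper's argument is shorter and entirely soft---it needs only finiteness of $\HF^i_{\mathrm{c}}(\overline{X},\sheaf{F})$ and the two formal limit observations, and it never invokes the $\Frob$-equivariant identification $\overline{C}\qiso\Lambda(\Gamma)\tensor^{\LDer} D$ (which in the paper is stated only for $G=\Gamma$ as diagram~\eqref{eqn:the diagram}). Your approach is more hands-on and makes the mechanism behind the degree shift transparent as a cokernel computation; the idempotent splitting by $e_\Delta$ lets you avoid the paper's preliminary base-change reduction, and your determinant argument for injectivity gives information the paper's proof does not isolate. The cost is that you must justify the equivariant quasi-isomorphism $\overline{C}(Y/X,\sheaf{F})\qiso\Lambda(\Gamma)\tensor^{\LDer}_{\Z_\ell}\RDer\Sectc(\overline{Y},\sheaf{F})$ for general finite $H$, and you must carry out the cokernel identification on $e_\Delta N$ carefully (your coefficient comparison in $(e_\Delta N)[[T]]$, using that $(\phi-1)$ is topologically nilpotent on the finitely generated $\Z_\ell$-module $e_\Delta N$, is the cleanest way to do this---the $\RDer\Hom$ reformulation via an untwisting isomorphism requires more care to make rigorous than the direct computation).
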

\begin{proof}
We first note that
$$
\HF^i(C(Y/X,\sheaf{F}))\isomorph\varprojlim_{V}\HF_{\mathrm{c}}^i(V,\sheaf{F})
$$
where the limit goes over the finite Galois subextensions of $Y/X$. From this, we see that we may replace $X$ by an appropriate $V$ and enlarge $\FF_q$ if necessary such that we may assume that $Y$ is the cyclotomic $\Z_{\ell}$-extension of $X$ and that
$$
Y\times_{\FF_{q^{\ell^{\infty}}}}\bar{\FF}_q=\overline{X}.
$$
The cohomology groups $\HF_{\mathrm{c}}^{i}(\overline{X},\sheaf{F})$ are known to be finitely generated $\Z_{\ell}$-modules, even in the case $\ell=p$ \cite[p.~84 \S~2.10]{SGA4h} (or use Proposition~\ref{prop:perfectness} for $G=1$). Since $\Gal(\bar{\FF}_q/\FF_{q^{\ell^{\infty}}})$ is of order prime to $\ell$ the Hochschild-Serre spectral sequence gives us
$$
\HF_{\mathrm{c}}^{i}(\overline{X},\sheaf{F})^{\Gal(\bar{\FF}_q/\FF_{q^{\ell^{\infty}}})}=\HF_{\mathrm{c}}^i(Y,\sheaf{F}),
$$
which is still finitely generated over $\Z_{\ell}$.

Let $X_n$ denote the subextension of degree $\ell^n$ in $Y/X$. From the Hochschild-Serre spectral sequence we obtain a short exact sequence
$$
0\mto \HF_{\mathrm{c}}^i(Y,\sheaf{F})_{\Gamma^{\ell^n}}\mto \HF_{\mathrm{c}}^{i+1}(X_n,\sheaf{F})\mto
\HF_{\mathrm{c}}^{i+1}(Y,\sheaf{F})^{\Gamma^{\ell^n}}\mto 0
$$
As $\HF_{\mathrm{c}}^{i+1}(Y,\sheaf{F})$ is finitely generated over the noetherian ring $\Z_{\ell}$, the increasing family of submodules $\HF_{\mathrm{c}}^{i+1}(Y,\sheaf{F})^{\Gamma^{\ell^n}}$ becomes stationary. This means in turn that the inverse limit with respect to the norm maps vanishes. Hence,
$$
\HF^{i+1}(C(Y/X,\sheaf{F}))\isomorph \varprojlim_{n}\HF_{\mathrm{c}}^i(Y,\sheaf{F})_{\Gamma^{\ell^n}}
$$
is the compactification of the $\Lambda(\Gamma)$-module $\HF_{\mathrm{c}}^i(Y,\sheaf{F})$. However, this module, being finitely generated over $\Z_\ell$, is already compact and therefore,
$$
\varprojlim_{n}\HF_{\mathrm{c}}^i(Y,\sheaf{F})_{\Gamma^{\ell^n}}=\HF_{\mathrm{c}}^i(Y,\sheaf{F}).
$$
\end{proof}

\subsection{On the Proof of Theorem~\ref{thm:Burnsthm}}

In the view of Theorem~\ref{thm:mythm} we are reduced to showing

\begin{proposition}\label{prop:reformulation}
Let $Y/X$ be an admissible $p$-adic Lie extension with Galois group $G$. For any flat $\Z_{p}$-sheaf $\sheaf{F}$ on $X$, there exists a unique $\nu(\sheaf{F})\in K_1'(\Lambda_{\Z_p}(G))$ such that
$$
\Phi_{\rho}(\nu(\sheaf{F}))=v(\sheaf{M}(\rho)\tensor_{\Z_p}\sheaf{F},\gamma^{-1})
$$
in $\Lambda_{\Order_{\rho}}(\Gamma)^{\times}$ for every Artin representation $\rho$ of $G$.
\end{proposition}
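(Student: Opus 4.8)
The plan is to regard $\nu(\sheaf{F})$ as the non-commutative incarnation of the error-term power series $v(\sheaf{F},t)$ and to construct it by descent from its values on Artin representations, then to deduce Theorem~\ref{thm:Burnsthm} by multiplying with the element $\zeta_!(\sheaf{F})$ of Theorem~\ref{thm:mythm}. By Theorem~\ref{thm:trace formula}.(2) the series $v(\sheaf{M}(\rho)\tensor_{\Z_p}\sheaf{F},t)$ lies in $\Order_{\rho}\langle t\rangle^{\times}$, so its image under $t\mapsto\gamma^{-1}$ is a genuine unit of $\Lambda_{\Order_{\rho}}(\Gamma)$; in particular no localisation at $S$ intervenes, which is why the element we want should live in $K'_1(\Lambda_{\Z_p}(G))$ and not merely in $K'_1(\Lambda_{\Z_p}(G)_S)$. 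Exactly as in the proof of the $S$-torsion property, writing $H=\ker(G\to\Gamma)$ and passing to the inverse limit over the finite quotients $G/N$ with $N\onsg G$ open in $H$ reduces us at once to the case that $H$ is finite, where $K'_1(\Lambda_{\Z_p}(G))$ is the compact inverse limit $\varprojlim_{U\onsg G}K'_1(\Z_p[G/U])$.

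Uniqueness is the soft half: it is the injectivity of the total evaluation map $K'_1(\Lambda_{\Z_p}(G))\to\prod_{\rho}\Lambda_{\Order_{\rho}}(\Gamma)^{\times}$, the product running over (isomorphism classes of irreducible) Artin representations $\rho$ of $G$. This is a standard fact about $K$-theory of Iwasawa algebras of $p$-adic Lie groups — once $SK_1$ is divided out, the Artin representations separate points — so the entire content of the proposition is existence.

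For existence I would show that the family $\bigl(v(\sheaf{M}(\rho)\tensor_{\Z_p}\sheaf{F},\gamma^{-1})\bigr)_{\rho\in\Ar(G)}$ lies in the image of $K'_1(\Lambda_{\Z_p}(G))$. Its behaviour under direct sums, under induction from open subgroups, and under $\Gal(\overline{\Q}_p/\Q_p)$ follows formally from the corresponding behaviour of $L(\sheaf{F},t)$ and $L_!(\sheaf{F},t)$, both of which are multiplicative in the sheaf and compatible with pushforward along finite coverings; so the additivity, inductivity and Galois-equivariance conditions cutting out this image are automatic. What remains are the congruences among the $v$-values attached to neighbouring representations; here the Oliver--Taylor integral logarithm, in the form exploited by Ritter--Weiss and Kakde, turns the multiplicative descent into an additive problem over a product of commutative Iwasawa algebras, where the congruences become visible because, by Theorem~\ref{thm:trace formula}.(2) applied at each finite level $G/U$, every $v$-value is already an $\Order\langle t\rangle^{\times}$-element and these are compatible under change of level $U$. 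A perhaps cleaner alternative is to carry the Emmerton--Kisin construction through with $\Lambda_{\Z_p}(G)$-coefficients: one produces a strictly perfect complex over $\Lambda_{\Z_p}(G)\langle t\rangle$ whose Frobenius twist $[1-t\Frob]$ specialises under $t\mapsto\gamma^{-1}$ directly to $\nu(\sheaf{F})$; although $K_1(\Order\langle t\rangle)\to K_1(\Order[[t]])$ need not be injective, this specialisation already lands in the compact group $K_1(\Lambda_{\Z_p}(G))$, so the non-commutative obstruction mentioned after Theorem~\ref{thm:trace formula} never gets in the way.

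I expect the genuine obstacle to be precisely this descent through the non-abelian part of $G$: for $\ell\neq p$ it disappears because $v\equiv 1$, whereas for $\ell=p$ one has to exploit the $\Order\langle t\rangle$-integrality of Emmerton--Kisin — the compatibility of the $v$-power series across finite levels — to verify the congruences defining the image; everything else (the reduction to $H$ finite, uniqueness, additivity, inductivity, Galois-equivariance) is formal. Granting $\nu(\sheaf{F})$, Theorem~\ref{thm:Burnsthm} follows by setting $\zeta(\sheaf{F})=\nu(\sheaf{F})\,\zeta_!(\sheaf{F})$: since $\nu(\sheaf{F})$ lies in the kernel $K_1(\Lambda_{\Z_p}(G))$ of $\del$ one gets $\del\zeta(\sheaf{F})=\del\zeta_!(\sheaf{F})=[C(Y/X,\sheaf{F})]$, and $\Phi_{\rho}(\zeta(\sheaf{F}))=v\cdot L_!=L(\sheaf{M}(\rho)\tensor_{\Z_p}\sheaf{F},\gamma^{-1})$ by multiplicativity of $\Phi_{\rho}$ and the definition $v=L/L_!$.
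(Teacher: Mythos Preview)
Your overall strategy is correct and matches the paper's: reduce to the case that $H$ is finite (indeed a finite $p$-group, via the standard Sujatha/Kakde reductions), observe that uniqueness is the injectivity of the total evaluation map on $K'_1$, and identify the crux as lifting the family $(v(\sheaf{M}(\rho)\tensor\sheaf{F},\gamma^{-1}))_{\rho}$ through the congruence description of the image. Your deduction of Theorem~\ref{thm:Burnsthm} from the proposition by setting $\zeta(\sheaf{F})=\nu(\sheaf{F})\zeta_!(\sheaf{F})$ is exactly right.

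Where you diverge from the paper is in how the congruences are verified. Your first route---checking the Ritter--Weiss/Kakde congruences directly for the family of commutative $v$-values---is in principle possible but would require case-by-case work specific to the Emmerton--Kisin power series. The paper instead exploits a shortcut: it first defines $v(\sheaf{F}_G,t)$ \emph{directly} as an element of $K_1(\Lambda(G)[[t]])$ via the explicit Frobenius formula $[\id-\Frob t]-\sum_{x}[\id-\Frob_x t^{\deg x}]$, then identifies $\Lambda(G)[[t]]\isomorph\Lambda(G\times\Z_p)$ and applies the Schneider--Venjakob map $\theta^{G\times\Z_p}_{Z\times\Z_p}$. Because $v(\sheaf{F}_G,t)$ already lives in $K_1$ of the larger Iwasawa algebra, its $\theta$-image \emph{automatically} satisfies (M1)--(M4); the only thing left to check is that the substitution $t\mapsto 1$ preserves (M1)--(M4), which is a statement purely about the congruence maps (transfer, trace, etc.) and has nothing to do with $\sheaf{F}$. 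This factorisation is what makes the argument clean. Your second alternative---carrying Emmerton--Kisin through with non-commutative coefficients to land in $K_1(\Lambda(G)\langle t\rangle)$---is precisely the difficulty the paper flags as open after Theorem~\ref{thm:trace formula}; there is at present no non-commutative analogue of $v(\sheaf{F},t)\in\Order\langle t\rangle^{\times}$, so that route is not available as stated.
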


We extend the defintion of $v(\sheaf{F},t)$ to the non-commutative world by setting
\begin{multline*}
K_1(A[[t]])\ni v(\sheaf{F},t)=\\
[\id-\Frob t\colon A[[t]]\tensor_{A}\RDer\Sectc(\overline{X},\sheaf{F})]-\sum_{x\in X^0}[\id-\Frob_xt^{\deg x}\colon A[[t]]\tensor_{A}\sheaf{F}_x]
\end{multline*}
for any compact noetherian ring $A$ (not necessarily commutative) and any flat $A$-sheaf $\sheaf{F}$. Note that the sum converges in the profinite topology of $K_1(A[[t]])$ because there are only finitely many closed points of $X$ of a given degree. Note further that the definition of $v(\sheaf{F},t)$ is compatible with derived tensor products in the following sense: If $B$ is another compact noetherian ring and $P$ a $B$-$A$-bimodule which is finitely generated and projective as $B$-module, then the image of $v(\sheaf{F},t)$ under the map
$$
K_1(A[[t]])\xrightarrow{P\tensor^{L}_{A}\cdot} K_1(B[[t]])
$$
is $v(P\tensor_{A}\sheaf{F},t)$.

If $A$ is commutative, then
$$
v(\sheaf{F},t)\in A\langle t\rangle^{\times}\subset A[[ t]]^{\times}=K_1(A[[t]])
$$
by Proposition~\ref{thm:trace formula}. In particular, we may consider its image $v(\sheaf{F},1)\in A^{\times}$ under the homomorphism $A\langle t\rangle\xrightarrow{t\mapsto 1} A$.

\begin{proposition}\label{prop:abelian case}
Assume that $Y/X$ is an admissible $p$-adic Lie extension with $G=\Gal(Y/X)$ abelian. For any compact commutative noetherian $\Z_p$-algebra $\Order$ and any flat $\Order$-sheaf $\sheaf{F}$, the element $\nu(\sheaf{F})=v(\sheaf{F}_G,1)$ validates the interpolation property of Proposition~\ref{prop:reformulation}.
\end{proposition}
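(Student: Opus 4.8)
The overall plan is to prove the interpolation identity $\Phi_\rho(\nu(\sheaf{F}))=v(\sheaf{M}(\rho)\tensor_{\Z_p}\sheaf{F},\gamma^{-1})$ for every Artin representation $\rho$ of $G$ by carrying the variable $t$ along throughout and only substituting $t\mapsto 1$ at the very end. Since $G$ is abelian, $\Lambda_\Order(G)$ is a compact commutative noetherian $\Z_p$-algebra, so Theorem~\ref{thm:trace formula} applies to the flat $\Lambda_\Order(G)$-sheaf $\sheaf{F}_G$ and gives $v(\sheaf{F}_G,t)\in\Lambda_\Order(G)\langle t\rangle^\times$. In particular $\nu(\sheaf{F})=v(\sheaf{F}_G,1)$ is a genuine unit of $\Lambda_\Order(G)$, hence defines a class in $K'_1(\Lambda_\Order(G))$ (equal to $K'_1(\Lambda_{\Z_p}(G))$ when $\Order=\Z_p$), and for $\rho$ of finite image the evaluation map restricts to a homomorphism $\Phi_\rho\colon K_1(\Lambda_\Order(G))\to\Lambda_{\Order_\rho}(\Gamma)^\times$, so both sides of the claimed identity live in $\Lambda_{\Order_\rho}(\Gamma)^\times$.

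To compute $\Phi_\rho(\nu(\sheaf{F}))$ I would use the factorisation $\Phi_\rho=\det\comp\,\mathrm{pr}\comp\,\mathrm{tw}_\rho$ together with the naturality of these maps with respect to $t\mapsto 1$, so that it suffices to evaluate $\mathrm{pr}\comp\,\mathrm{tw}_\rho$ on $v(\sheaf{F}_G,t)$. Because $G$ is abelian and $\rho$ has finite image, $\rho$ is a sum of characters, $\mathrm{tw}_\rho$ is the corresponding sum of the maps $\mathrm{tw}_\chi$, and each $\mathrm{pr}\comp\,\mathrm{tw}_\chi$ is simply induced by the ring homomorphism $\Lambda_\Order(G)\to\Lambda_{\Order_\chi}(\Gamma)$, $g\mapsto\chi(g)\bar g$; in any case $\mathrm{pr}\comp\,\mathrm{tw}_\rho$ is induced by a $\Lambda_{\Order_\rho}(\Gamma)$-$\Lambda_\Order(G)$-bimodule $T_\rho$ that is finitely generated projective over $\Lambda_{\Order_\rho}(\Gamma)$. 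The compatibility of $v(\cdot,t)$ with bimodule maps recorded just above the proposition then gives
$$
\mathrm{pr}\bigl(\mathrm{tw}_\rho(v(\sheaf{F}_G,t))\bigr)=v\bigl(T_\rho\tensor_{\Lambda_\Order(G)}\sheaf{F}_G,\,t\bigr),
$$
and the heart of the argument is the identification of the flat $\Lambda_{\Order_\rho}(\Gamma)$-sheaf $T_\rho\tensor_{\Lambda_\Order(G)}\sheaf{F}_G$ with $(\sheaf{M}(\rho)\tensor_{\Z_p}\sheaf{F})_\Gamma$. This I would check on stalks: unwinding the diagonal $G$-action defining $\mathrm{tw}_\rho$, the projection to the $\Gamma$-level, the iterated pushforward defining $\sheaf{F}_G$, and the pushforward defining $\sheaf{M}(\rho)$, both sheaves have stalk $\Lambda_{\Order_\rho}(\Gamma)\tensor_{\Order_\rho}(\sheaf{M}(\rho)\tensor_{\Z_p}\sheaf{F})_{\bar x}$ at any geometric point $\bar x$ of $X$, and the Frobenius at a closed point acts the same way on both (the comparison involves the standard bookkeeping between $\rho$ and $\rho^{-1}$ and between geometric and arithmetic Frobenius). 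Granting this, and since $v$ of a sheaf with commutative coefficients is already a unit on which $\det$ is the identity, we obtain $\Phi_\rho(\nu(\sheaf{F}))=v\bigl((\sheaf{M}(\rho)\tensor_{\Z_p}\sheaf{F})_\Gamma,\,1\bigr)$.

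It then remains to identify $v(\sheaf{N}_\Gamma,1)$ with $v(\sheaf{N},\gamma^{-1})$, where I abbreviate $\sheaf{N}=\sheaf{M}(\rho)\tensor_{\Z_p}\sheaf{F}$. Here the K\"unneth formula for the trivial product of $X$ with $\Spec\FF_q$, exactly as used in the proof of Proposition~\ref{prop:perfectness} and encoded for the global term in diagram~\eqref{eqn:the diagram}, identifies $\RDer\Sectc(\overline{X},\sheaf{N}_\Gamma)$ with $\Lambda_{\Order_\rho}(\Gamma)\tensor^{\LDer}\RDer\Sectc(\overline{X},\sheaf{N})$ carrying the Frobenius $\gamma^{-1}\tensor\Frob$; a pointwise version of the same computation identifies the stalk of $\sheaf{N}_\Gamma$ at a closed point $x$ with $\Lambda_{\Order_\rho}(\Gamma)\tensor\sheaf{N}_{\bar x}$ carrying a local Frobenius differing from $\Frob_x$ by the factor $\gamma^{-\deg x}$. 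Substituting these into the defining formula for $v$ turns $\id-\Frob t$ into $\id-\Frob(\gamma^{-1}t)$ and each $\id-\Frob_x t^{\deg x}$ into $\id-\Frob_x(\gamma^{-1}t)^{\deg x}$; in other words $v(\sheaf{N}_\Gamma,t)$ is the image of $v(\sheaf{N},s)$ under $s\mapsto\gamma^{-1}t$, and setting $t=1$ yields $v(\sheaf{N}_\Gamma,1)=v(\sheaf{N},\gamma^{-1})$, which completes the proof.

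I expect the main obstacle to be the stalk-level identification in the second paragraph: organising the twist $\mathrm{tw}_\rho$, the projection $\mathrm{pr}$, and the two different pushforwards into a single clean statement, and above all pinning down the Frobenius conventions so that the shift produced there and the shifts produced in the last paragraph (the global one from diagram~\eqref{eqn:the diagram} and the local ones at the closed points) are all the same substitution $s\mapsto\gamma^{-1}t$. Everything else---the unit statement from Theorem~\ref{thm:trace formula}, convergence of the infinite product defining $v$ in the profinite topology of $K_1$, and the compatibility of $v$ with bimodule maps---is already available.
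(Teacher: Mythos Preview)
Your proposal is correct and follows essentially the same route as the paper's proof: use the compatibility of $v(\cdot,t)$ with bimodule tensor products to reduce $\Phi_\rho(\nu(\sheaf{F}))$ to $v((\sheaf{M}(\rho)\tensor_{\Z_p}\sheaf{F})_\Gamma,1)$, then use the commutative diagram~\eqref{eqn:the diagram} (together with its pointwise analogue at each closed point) to rewrite $v(\sheaf{N}_\Gamma,t)$ as $v(\sheaf{N},\gamma^{-1}t)$ and evaluate at $t=1$. The paper's proof is simply a three-line summary of this; your version makes explicit the stalk identification $T_\rho\tensor_{\Lambda(G)}\sheaf{F}_G\cong(\sheaf{M}(\rho)\tensor\sheaf{F})_\Gamma$ and the local Euler-factor bookkeeping that the paper leaves to the reader.
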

\begin{proof}
Choose $A=\Lambda(G)$, $B=\Lambda_{\Order_{\rho}}(\Gamma)$. By the compatibility with derived tensor products we conclude that
$$
\Phi_{\rho}(\nu(\sheaf{F}))=\nu(\sheaf{M}(\rho)\tensor_{\Z_p}\sheaf{F})=v((\sheaf{M}(\rho)\tensor_{\Z_p}\sheaf{F})_{\Gamma},1).
$$
One then checks using the diagram~(\ref{eqn:the diagram}) on page~\pageref{eqn:the diagram} that
$$
v((\sheaf{M}(\rho)\tensor_{\Z_p}\sheaf{F})_{\Gamma},t)=v(\sheaf{M}(\rho)\tensor_{\Z_p}\sheaf{F},\gamma^{-1}t).
$$
\end{proof}

Unfortunately, it is a priori not clear that we can evaluate the element $v(\sheaf{F}_G,t)$ in $1$ if $G$ is not commutative. Even if we knew that $v(\sheaf{F}_G,t)$ was in the image of $K_1(\Lambda(G)\langle t\rangle)\mto K_1(\Lambda(G)[[t]])$, the map could still be so far from injective that the evaluation of an element in the preimage of $v(\sheaf{F}_G,t)$ in $t=1$ depends on the particular choice of it.

However, we will prove below that the evaluation is possible for a $p$-adic Lie group $G$ under the hypotheses
\begin{enumerate}
\item[(H1)] $G$ contains an open central subgroup $Z$,
\item[(H2)] $G$ is a pro-$p$-group,
\item[(H3)] There exists a system of representatives $R\subset G$ for the cosets in $G/Z$ which contains $1$ and consists of full $G$-orbits,
\item[(H4)] $\Order$ is the valuation ring of a finite unramified extension of $\Q_p$ and $p\neq 2$
\end{enumerate}
introduced in \cite{SchneiderVenjakob:Keins}.

Recall the notation
$$
\Sg(G,Z)=\{U\colon Z\subset U\subset G\}
$$
and the homomorphism
$$
\theta^G_Z\colon K_1(\Lambda(G))\mto
\prod_{U\in \Sg(G,Z)}\Lambda(U^{\ab})^{\times}
$$
from \cite[\S 4]{SchneiderVenjakob:Keins}. We also recall from \emph{loc. cit.}, Theorem~4.10, that under the above hypotheses the kernel of $\theta^G_Z$ is the group $SK_1(\Lambda(G))$ and that its image consists of precisely those elements satisfying the congruence conditions $(M1)$--$(M4)$ given in \emph{loc. cit.}, before Lemma~4.6.

Note that if $G$ satisfies $(H1)$--$(H3)$ then so does $G\times\Z_p$, with $Z$ replaced by $Z\times\Z_p$. Moreover, we may canonically identify
$$
\Sg(G\times \Z_p,Z\times \Z_p)\isomorph\Sg(G,Z)
$$
via the projection map. Using the isomorphism $\Lambda(G\times \Z_p)\isomorph \Lambda(G)[[t]]$ that maps $1\in \Z_p$ to the power series $1-t$ and the map $\theta^{G\times \Z_p}_{Z\times \Z_p}$ we obtain a homomorphism
$$
\theta_t\colon K_1(\Lambda(G)[[t]])\mto \prod_{U\in\Sg(G,Z)}\Lambda(U^\ab)[[t]]^{\times}.
$$
We let $K^{\diamond}_1(\Lambda(G)[[t]])$ denote the preimage of
$$
\prod_{U\in \Sg(G,Z)}\Lambda(U^{\ab})\langle t\rangle^{\times}
$$
under the homomorphism $\theta_t$.

\begin{proposition}\label{prop:evaluation}
Assume $(H1)$--$(H4)$. There exists a unique homomorphism $\varepsilon$ filling the commutative diagram
$$
\xymatrix{
K^{\diamond}_1(\Lambda(G)[[t]])\ar[r]^{\theta_t}\ar[d]^{\varepsilon}&\prod\limits_{U\in \Sg(G,Z)}\left(\Lambda(U^{\ab})\langle t\rangle \right)^{\times}\ar[d]^{t\mapsto 1}\\
K_1'(\Lambda(G))\ar[r]^{\theta^{G}_{Z}}&\prod\limits_{U\in \Sg(G,Z)}\Lambda(U^{\ab})^{\times}.
}
$$
\end{proposition}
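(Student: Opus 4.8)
The plan is to read the whole statement off the Schneider--Venjakob description of $\theta^G_Z$ recalled just above. First I would dispose of uniqueness: by \cite[Thm.~4.10]{SchneiderVenjakob:Keins} the kernel of $\theta^G_Z$ on $K_1(\Lambda(G))$ is $SK_1(\Lambda(G))$, so $\theta^G_Z$ is injective on $K_1'(\Lambda(G))$, and any $\varepsilon$ making the square commute is forced to send $x$ to the unique element of $K_1'(\Lambda(G))$ whose image under $\theta^G_Z$ is $(t\mapsto 1)(\theta_t(x))$. Hence the entire content is the \emph{existence} of such preimages, i.e.\ the assertion that the composite
\[
K^{\diamond}_1(\Lambda(G)[[t]])\xrightarrow{\theta_t}\prod_{U\in\Sg(G,Z)}\bigl(\Lambda(U^{\ab})\langle t\rangle\bigr)^{\times}\xrightarrow{t\mapsto 1}\prod_{U\in\Sg(G,Z)}\Lambda(U^{\ab})^{\times}
\]
lands inside the image of $\theta^G_Z$; once this is known, $\varepsilon$ is defined on each $x$ as the unique such preimage, and it is automatically a group homomorphism, being a composite of group homomorphisms with the inverse of an injective one.

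Next I would identify the image of $\theta_t$. Since $G\times\Z_p$ again satisfies $(H1)$--$(H4)$ (with $Z$ replaced by $Z\times\Z_p$; $(H4)$ is untouched), and since $\Sg(G\times\Z_p,Z\times\Z_p)\isomorph\Sg(G,Z)$ via $U\times\Z_p\mapsto U$ --- compatibly with $(U\times\Z_p)^{\ab}=U^{\ab}\times\Z_p$ and with the isomorphism $\Lambda((U\times\Z_p)^{\ab})\isomorph\Lambda(U^{\ab})[[t]]$ coming from the chosen $\Lambda(G\times\Z_p)\isomorph\Lambda(G)[[t]]$ --- the map $\theta_t$ is, by construction, just $\theta^{G\times\Z_p}_{Z\times\Z_p}$ transported along these identifications. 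So \cite[Thm.~4.10]{SchneiderVenjakob:Keins} tells me that $\mathrm{im}\,\theta_t$ is exactly the set of tuples in $\prod_U\Lambda(U^{\ab})[[t]]^{\times}$ obeying the congruences $(M1)$--$(M4)$ for $G\times\Z_p$. For $x\in K^{\diamond}_1(\Lambda(G)[[t]])$ every component of $\theta_t(x)$ lies, by definition of $K^{\diamond}_1$, in the restricted power series ring $\Lambda(U^{\ab})\langle t\rangle$, on which evaluation at $t=1$ is a well-defined ring homomorphism to $\Lambda(U^{\ab})$; so the only thing left to prove is that the tuple $(t\mapsto 1)(\theta_t(x))$ satisfies $(M1)$--$(M4)$ now for $G$, after which \cite[Thm.~4.10]{SchneiderVenjakob:Keins} places it in $\mathrm{im}\,\theta^G_Z$ and we are done.

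I expect this last point to be the main obstacle, and essentially the only real work. Each of $(M1)$--$(M4)$ is an equality or congruence among elements assembled from the components $a_{U\times\Z_p}\in\Lambda(U^{\ab})\langle t\rangle^{\times}$ by means of the operations set up in \cite[\S 4]{SchneiderVenjakob:Keins}: the maps induced by inclusions $U\hookrightarrow U'$, the transfer maps, the conjugation action of $G$, the Frobenius-type twist $\varphi$, and the additive norm/trace maps. The strategy is to go through these one at a time and observe that, under the identification $\Lambda((U\times\Z_p)^{\ab})\isomorph\Lambda(U^{\ab})[[t]]$, each operation for $G\times\Z_p$ is obtained from the corresponding operation for $G$ by a transparent rule in the variable $t$ --- the induced and conjugation maps are the $\Lambda(U^{\ab})[[t]]$-linear extensions, the twist $\varphi$ acts $\varphi$-semilinearly, and the transfer along $U\times\Z_p\supset U'\times\Z_p$ sends the extra topological generator to its $[U:U']$-th power --- all of which commute with $t\mapsto 1$ on the relevant subrings of restricted power series. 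The structural inputs that make this work are precisely that the extra $\Z_p$ is central and is contained in every member of $\Sg(G\times\Z_p,Z\times\Z_p)$. Granting this compatibility, applying $t\mapsto 1$ componentwise turns a solution of $(M1)$--$(M4)$ for $G\times\Z_p$ into a solution for $G$, which is exactly what is needed, and the construction of $\varepsilon$ --- together with the commutativity and uniqueness claimed in the proposition --- is then complete.
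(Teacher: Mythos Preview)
Your approach coincides with the paper's: reduce to showing that evaluation at $t=1$ carries the image of $\theta_t$ into the image of $\theta^G_Z$, and verify this by checking that the congruences $(M1)$--$(M4)$ for $G\times\Z_p$ descend under $t\mapsto 1$ to the corresponding congruences for $G$. Uniqueness and the homomorphism property follow exactly as you say.

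There is, however, one genuine subtlety that your plan glosses over, and it is precisely the point the paper singles out. You write that the structural maps ``commute with $t\mapsto 1$ on the relevant subrings of restricted power series'' and conclude that a solution of $(M1)$--$(M4)$ for $G\times\Z_p$ evaluates to a solution for $G$. This is fine for the conditions that are equalities, but $(M3)$ is an \emph{existential} statement: it asserts that $\operatorname{ver}^V_U(f_V)-f_U=\sigma_U^V(x)$ for \emph{some} $x\in\Lambda(U^{\ab})$. The witness $x(t)$ furnished by $(M3)$ for $G\times\Z_p$ lies only in $\Lambda(U^{\ab})[[t]]$; nothing forces it into $\Lambda(U^{\ab})\langle t\rangle$, so you cannot simply set $x=x(1)$. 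What you \emph{do} know is that $\sigma_U^V(x(t))$ lies in $\Lambda(U^{\ab})\langle t\rangle$, since it equals a difference of restricted power series (here one uses, as the paper computes, that $\operatorname{ver}^{V\times\Z_p}_{U\times\Z_p}$ sends $t$ to $1-(1-t)^p$ and hence preserves $\Lambda(U^{\ab})\langle t\rangle$). Writing $x(t)=\sum_i x_it^i$, this says $\sigma_U^V(x_i)\to 0$ in the $\Jac$-adic topology. The missing step is then a compactness argument: $\Lambda(U^{\ab})$ is compact, so the continuous additive map $\sigma_U^V$ has closed image, and therefore the convergent sum $\sum_i\sigma_U^V(x_i)$ lies in $\operatorname{im}\sigma_U^V$, producing the required $x\in\Lambda(U^{\ab})$. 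Without this observation the descent of $(M3)$ is not justified; once you insert it, your argument matches the paper's.
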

\begin{proof}
Since the homomorphism $\theta^G_{Z}$ in the diagram is injective, it suffices to show that the evaluation in $t=1$ of an element in the image of $\theta_t$ lies in the image of $\theta^{G}_{Z}$. As noted above, the images of $\theta_t$ and $\theta^{G}_{Z}$ may be described by a list of explicit congruences $(M1)$--$(M4)$. Let
$$f(t)=(f_U(t))_{U\in \Sg(G,Z)}\in \theta_t(K^{\diamond}_1(\Lambda(G)[[t]])).$$
We will exemplarily check that $f(1)$ satisfies
$$
(M3)\qquad\operatorname{ver}^{V}_{U}(f_V(1))-f_U(1)=\sigma_U^V(x)\quad\text{for a $x\in \Lambda(U^{\mathrm{ab}})$ if $[V:U]=p$}.
$$
Here,
$$
\sigma_U^V\colon \Lambda(U^{\mathrm{ab}})\rightarrow \Lambda(U^{\mathrm{ab}}), \qquad x\mapsto \sum_{g\in V/U}gxg^{-1},
$$
and
$$
\operatorname{ver}^{V}_{U}\colon \Lambda(V^{\mathrm{ab}})\rightarrow\Lambda(U^{\mathrm{ab}})
$$
is the unique continuous ring homomorphism which coincides with the transfer map on $V^{\ab}$ and with the absolute Frobenius automorphism on the coefficient ring $\mathcal{\Order}$.

Fix $U\subset V\in \Sg(G,Z)$ with $[V:U]=p$ and let $(g,a)\in V^{\mathrm{ab}}\times\Z_p$. Then
$$\operatorname{ver}^{V\times\Z_p}_{U\times\Z_p}=(\operatorname{ver}^{V}_{U}(g),[V:U]a),$$
in particular,
$$
\operatorname{ver}^{V\times\Z_p}_{U\times\Z_p}(t)=1-(1-t)^p
$$
for the indeterminate $t$.
We conclude that the power series $\operatorname{ver}^{V\times\Z_p}_{U\times\Z_p}(f(t))$ lies in $\left(\Lambda(U^{\ab})\langle t\rangle\right)^{\times}$ and that its evaluation in $1$ agrees with $\operatorname{ver}^{V}_{U}(f_V(1))$. By assumption, there exist a power series
$$
x(t)=\sum_{i=0}^{\infty} x_i t^i\in \Lambda(U^{\mathrm{ab}})[[t]]
$$
with
$$
\operatorname{ver}^{V\times \Z_p}_{U\times \Z_p}(f_V(t))-f_U(t)=\sigma_{U\times\Z_p}^{V\times\Z_p}(x(t))=\sum_{g\in V/U}gx(t)g^{-1}\in \Lambda(U^{\ab})\langle t\rangle.
$$
Hence, for every $n$, $\sigma_U^V(x_i)\in \Jac(\Lambda(U^\mathrm{ab}))^n$ for almost all $i$. Since $\Lambda(U^{\ab})$ is compact, the image of the continuous map $\sigma_U^V$ is closed and therefore, there exists an $x\in \Lambda(U^{\mathrm{ab}})$ with
$$
\sigma_U^V(x)=\sum_{i=0}^\infty \sigma_U^V(x_i)=\operatorname{ver}^{V}_{U}(f_V(1))-f_U(1).
$$
Similarily, one shows that $f(1)$ also satisfies $(M1)$, $(M2)$, and $(M4)$. In each case, the main step is to verify that each of the maps involved in the formulation of the respective congruence relations are compatible with the evaluation in $1$. This can be achieved by a quick inspection of the corresponding definitions given in \cite{SchneiderVenjakob:Keins}.
\end{proof}

\begin{proof}[Proof of Proposition~\ref{prop:reformulation}]
Using the same reduction arguments as in \cite[\S 3]{Sujatha:Reductions} and in \cite[Prop.~2.2]{Kakde:Congruences} we see that it suffices to consider the cases
\begin{enumerate}
 \item $G$ is abelian,
 \item $H$ is a finite $p$-group,
\end{enumerate}
The first case has been settled in Proposition~\ref{prop:abelian case}. Therefore, assume that $H$ is a finite $p$-group. Using Proposition~\ref{prop:evaluation}, we may set
$$
\nu(\sheaf{F})=\varepsilon(v(\sheaf{F}_G,t)).
$$
The interpolation property is then verified as in Proposition~\ref{prop:abelian case}. This concludes the proof of Proposition~\ref{prop:reformulation} and hence, also the proof of Theorem~\ref{thm:Burnsthm}.
\end{proof}

Note that our $\theta_t$ differs from the one used in \cite{Burns:MCinGIwTh+RelConj}. Instead of using the identification $\Lambda(G)[[t]]\isomorph \Lambda(G\times\Z_p)$ Burns considers $\Lambda(G)[[t]]$ as an Iwasawa algebra with coefficient ring $\Order[[t]]$ and transfers the logarithm techniques of Oliver and Taylor to power series rings. This also yields slightly different congruence relations, but the difference vanishes if one evaluates in $t=1$.

We also remark that
$$
\ker \theta_t= SK_1(\Lambda(G\times\Z_p))= SK_1(\Lambda(G))
$$
by \cite[Prop. 5.3]{Witte:NoncommutativeLFunctions}.

\section{Main Conjectures for Function Fields}\label{sec:Function fields}

In this section, we will consider the special case that $X$ is a geometrically connected smooth affine curve over $\FF_q$. Let $F$ denote the function field of $X$, $X^c$ the smooth compact curve with function field $F$ and $\Sigma=X^c\setminus X$. Every admissible $\ell$-adic Lie extension $Y/X$ may then be seen as a Galois extension $F_{\infty}/F$ such that
\begin{enumerate}
\item the Galois group $G$ of $F_{\infty}/F$ is an $\ell$-adic Lie group,
\item $F_{\infty}/F$ is unramified outside $\Sigma$,
\item $F_\infty$ contains $\FF_{q^{\ell^{\infty}}}F$.
\end{enumerate}

Assume that $\ell\neq p$ and let $\Z_{\ell}(1)$ be the $\Z_{\ell}$-sheaf on $X$ corresponding to the cyclotomic character $\Gal(\bar{\FF}_q/\FF_q)\mto \Z_{\ell}^{\times}$. We set
$$
C(F_{\infty}/F)=C(Y/X,\Z_{\ell}(1))[-3].
$$
Poitou-Tate duality then implies
$$
C(F_{\infty}/F)=\RDer\Hom(\RDer\Sect_{\et}(Y,\Q_{\ell}/\Z_{\ell}),\Q_{\ell}/\Z_{\ell}),
$$
in perfect accordance with the number field case \cite[\S 2.3]{Kakde2}. Since $Y$ is $K(\pi,1)$, one may replace the appearance of \'etale cohomology of $Y$ by the Galois cohomology of $\fundG(Y,\xi)$ if one desires.

One of the major differences between number fields and function fieds is that $\Spec \Z$ has no sensible compactification in the category of schemes. In particular, the standard construction of the total derived section functor with proper support does not work. However, the above duality statement explains why the complex $C(F_{\infty}/F)$ that appears in Kakde's work is a sensible replacement.

As in the number field case, we let $\X(F_\infty)$ denote the Galois group of the maximal abelian $\ell$-extension of $F_{\infty}$ unramified outside $\Sigma$. A quick calculation then shows
\begin{align*}
\HF^{-1}(C(F_{\infty}/F))&=\X(F_\infty),\\
\HF^{0}(C(F_{\infty}/F))&=\Z_{\ell},\\
\HF^{i}(C(F_{\infty}/F))&=0\quad\text{otherwise.}
\end{align*}

Beware that the module $\X(F_{\infty})$ in itself may have no finite projective resolution and therefore, no well-defined class in $K_0(\Lambda(G),\Lambda(G)_S)$ if $G$ has elements of order $\ell$. If we additionally assume that such elements do not exist, then
$$
[C(F_{\infty}/F)]=[\Z_{\ell}]-[\X(F_\infty)]
$$
in $K_0(\Lambda(G),\Lambda(G)_S)$ and Theorem~\ref{thm:mythm} gives the precise equivalent of Theorem~\ref{thm:mc number field} plus the vanishing of the $\mu$-invariant.

A precise equivalent of the non-commutative Iwasawa main conjecture for elliptic curves over number fields \cite{CFKSV} can be deduced from Theorem~\ref{thm:mythm} by considering the $\Z_\ell$-sheaf on $X$ given by the $\ell$-adic Tate module ($\ell\neq p$) of an elliptic curve or, more generally, any abelian variety over $F$. More details are given in \cite{Witte:FunctionFields}.

The case $\ell=p$ looks more difficult and seems very different in nature. Theorem~\ref{thm:Burnsthm} may be applied to the constant sheaf $\Z_p$ on $X$ to deduce a main conjecture for the leading terms of the Artin $L$-functions $L^A(\rho,s)$ in $s=0$. The cohomology of the complex $C(Y/X,\Z_p)$ is concentrated in degree $2$ and its $\Lambda(G)$-dual is related to the inverse limit of the $p$-parts of the class groups of the intermediate fields \cite[Prop.~4.1]{Burns:Congruences}. However, we cannot apply Burns' theorem to obtain interpolations of the leading terms in $s=n$ for arbitrary $n$ as the natural constructions of the Tate twists $\Z_p(n)$ via Bloch's cycle complexes do not give $\Z_p$-sheaves in our sense.  Moreover, the formulas for the leading terms of the zeta functions deduced by Milne \cite{Mil:VZFVFF} and others hint that there is a 'tangent space' contribution from the DeRham-complex which should appear in the boundary term of a non-commutative $p$-adic $L$-function at non-zero Tate twists. An equivariant generalisation of Milne's formulas is still missing.

Completing the work of Ochiai and Trihan \cite{OchiaiTrihan:OnTheSelmer}, a noncommutative main conjecture for abelian varieties over $F$ in the case $\ell=p$ is formulated in \cite{TrihanVauclair} and there is progress towards a proof, at least under certain hypotheses. The conjectured interpolation property is only for the leading terms of the $L$-functions of Artin twists of the abelian variety in $s=1$. Instead of \'etale cohomology the authors use flat cohomology as in \cite{KT:CBSDCP}. The boundary of the conjectured non-commutative $L$-function is given by the flat total derived section complex of the (flat) $p$-adic Tate module plus a tangent space contribution given by the total derived sections of the Lie algebra of the abelian variety. One might be tempted to apply Theorem~\ref{thm:Burnsthm} to the $\Z_p$-sheaf on $X$ given by the (\'etale) $p$-adic Tate module, but the $L$-function of this sheaf differs from the $L$-function of the abelian variety and it is not clear how to handle the difference.

Fixing a place $\place{p}$ of the function field $F$, one may speculate about yet another, completely different, approach to formulate an analogue of the Iwasawa main conjecture, with the valuation ring $A_{\place{p}}$ of $F_{\place{p}}$ taking over the role of $\Z_p$.  One can define a characteristic $p$ version of an $L$-function taking values in the completion of the algebraic closure of $F_{\place{p}}$ \cite{Goss} and one can prove formulas for special values of  characteristic $p$ $L$-functions \cite{Lafforgue}. According to the general philosophy, a possible analogue of the Iwasawa main conjecture should then give information about the limit of these values if $F$ varies in a suitable family of field extensions. Such a suitable family might be given by the extensions obtained from Drinfeld-Hayes modules. These are widely regarded as the right analogues of cyclotomic extensions in this setting.

\bibliographystyle{amsalpha}
\bibliography{Literature}

\newcommand{\etalchar}[1]{$^{#1}$}
\providecommand{\bysame}{\leavevmode\hbox to3em{\hrulefill}\thinspace}
\providecommand{\MR}{\relax\ifhmode\unskip\space\fi MR }
\providecommand{\MRhref}[2]{%
  \href{http://www.ams.org/mathscinet-getitem?mr=#1}{#2}
}
\providecommand{\href}[2]{#2}
\begin{thebibliography}{CFK{\etalchar{+}}05}

\bibitem[Bru66]{Brumer:PseudocompactAlgebras}
A.~Brumer, \emph{Pseudocompact algebras, profinite groups, and class
  formations}, Journal of Algebra (1966), no.~4, 442--470.

\bibitem[Bur11a]{Burns:Congruences}
D.~Burns, \emph{Congruences between derivatives of geometric {$L$}-functions},
  Invent. math. (2011), no.~184, 221--256.

\bibitem[Bur11b]{Burns:MCinGIwTh+RelConj}
\bysame, \emph{On main conjectures of geometric {I}wasawa theory and related
  conjectures}, Preprint., 2011.

\bibitem[CFK{\etalchar{+}}05]{CFKSV}
J.~Coates, T.~Fukaya, K.~Kato, R.~Sujatha, and O.~Venjakob, \emph{The {$GL_2$}
  main conjecture for elliptic curves without complex multiplication}, Publ.
  Math. Inst. Hautes Etudes Sci. (2005), no.~101, 163--208.

\bibitem[CK12]{CoatesKim:Introduction}
John Coates and Dohyeong Kim, \emph{Introduction to the work of {M.} {K}akde on
  the non-commutative main conjectures for totally real fields}, this volume,
  2012.

\bibitem[Del77]{SGA4h}
P.~Deligne, \emph{Cohomologie \'etale ({SGA} 4$\frac{1}{2}$)}, Lecture Notes in
  Mathematics, no. 569, Springer, Berlin, 1977.

\bibitem[EK01]{EmertonKisin}
M.~Emerton and M.~Kisin, \emph{Unit {L}-functions and a conjecture of {K}atz},
  Ann. of Math. \textbf{153} (2001), no.~2, 329--354.

\bibitem[FK06]{FK:CNCIT}
T.~Fukaya and K.~Kato, \emph{A formulation of conjectures on {$p$}-adic zeta
  functions in non-commutative {I}wasawa theory}, Proceedings of the St.
  Petersburg Mathematical Society (Providence, RI), vol. XII, Amer. Math. Soc.
  Transl. Ser. 2, no. 219, American Math. Soc., 2006, pp.~1--85.

\bibitem[Gos96]{Goss}
David Goss, \emph{Basic structures of function field arithmetic}, Ergebnisse
  der Mathematik und ihrer Grenzgebiete (3), vol.~35, Springer-Verlag, Berlin,
  1996.

\bibitem[Gro77]{SGA5}
A.~Grothendieck, \emph{Cohomologie $\ell$-adique et fonctions {$L$} ({SGA} 5)},
  Lecture Notes in Mathematics, no. 589, Springer, Berlin, 1977.

\bibitem[Jen72]{Jensen:FuncteusDerives}
C.~U. Jensen, \emph{Les foncteurs d\'eriv\'es de {$\varprojlim$} et leurs
  applications en th\'eorie des modules}, Lecture Notes in Mathematics, Vol.
  254, Springer-Verlag, Berlin, 1972. \MR{0407091 (53 \#10874)}

\bibitem[Kak11]{Kakde2}
M.~Kakde, \emph{The main conjecture of {I}wasawa theory for totally real
  fields}, Preprint, 2011.

\bibitem[Kak12]{Kakde:Congruences}
Mahesh Kakde, \emph{Congruences between abelian $p$-adic zeta functions}, this
  volume, 2012.

\bibitem[KS09]{KerzSchmidt}
Moritz Kerz and Alexander Schmidt, \emph{Covering data and higher dimensional
  global class field theory}, J. Number Theory \textbf{129} (2009), no.~10,
  2569--2599.

\bibitem[KT03]{KT:CBSDCP}
K.~Kato and F.~Trihan, \emph{On the conjectures of {B}irch and
  {S}winnerton-{D}yer in characteristic {$p>0$}}, Inventiones Mathematicae
  \textbf{153} (2003), 537--592.

\bibitem[Laf09]{Lafforgue}
Vincent Lafforgue, \emph{Valeurs sp\'eciales des fonctions {$L$} en
  caract\'eristique {$p$}}, J. Number Theory \textbf{129} (2009), no.~10,
  2600--2634.

\bibitem[Mil80]{Milne:EtCohom}
J.~S. Milne, \emph{Etale cohomology}, Princeton Mathematical Series, no.~33,
  Princeton University Press, New Jersey, 1980.

\bibitem[Mil86]{Mil:VZFVFF}
\bysame, \emph{Values of zeta functions of varieties over finite fields},
  American Journal of Mathematics \textbf{108} (1986), 297--360.

\bibitem[MT07]{MT:1TWKTS}
F.~Muro and A.~Tonks, \emph{The {1}-type of a {W}aldhausen {K}-theory
  spectrum}, Advances in Mathematics \textbf{216} (2007), no.~1, 178--211.

\bibitem[Mur08]{Muro:Maltsiniotis}
Fernando Muro, \emph{Maltsiniotis's first conjecture for {$K_1$}}, Int. Math.
  Res. Not. IMRN (2008), no.~4, Art. ID rnm153, 31.

\bibitem[OT09]{OchiaiTrihan:OnTheSelmer}
T.~Ochiai and F.~Trihan, \emph{On the {S}elmer groups of abelian varieties over
  function fields of characteristic {$p>0$}}, Math. Proc. Cambridge Philos.
  Soc. \textbf{146} (2009), no.~1, 23--43.

\bibitem[Suj12]{Sujatha:Reductions}
R.~Sujatha, \emph{Reductions of the main conjecture}, this volume, 2012.

\bibitem[SV12]{SchneiderVenjakob:Keins}
Peter Schneider and Otmar Venjakob, \emph{{$K_1$} of certain {I}wasawa
  algebras, after {K}akde}, this volume, 2012.

\bibitem[TV11]{TrihanVauclair}
Fabien Trihan and David Vauclair, \emph{On the {I}wasawa main conjecture of
  abelian varieties over function fields of characteristic $p>0$.}, in
  preparation, 2011.

\bibitem[Wit08]{Witte:PhD}
M.~Witte, \emph{Noncommutative {I}wasawa main conjectures for varieties over
  finite fields}, Ph.D. thesis, Universit{\"a}t Leipzig, 2008.

\bibitem[Wit09]{Witte:NoncommutativeLFunctions}
\bysame, \emph{Noncommutative {$L$}-functions for varieties over finite
  fields}, Preprint, 2009.

\bibitem[Wit10]{Witte:MCVarFF}
\bysame, \emph{On a noncommutative {I}wasawa main conjecture for varieties over
  finite fields}, Preprint, 2010.

\bibitem[Wit11a]{Witte:FunctionFields}
\bysame, \emph{Noncommutative {I}wasawa main conjectures for function fields},
  in preparation, 2011.

\bibitem[Wit11b]{Witte:Splitting}
\bysame, \emph{On a localisation sequence for the {K}-theory of skew power
  series rings}, Preprint, 2011.

\bibitem[WY92]{WeibYao:Localization}
C.~Weibel and D.~Yao, \emph{Localization for the {$K$}-theory of noncommutative
  rings}, Algebraic {$K$}-Theory, Commutative Algebra, and Algebraic Geometry,
  Contemporary Mathematics, no. 126, AMS, 1992, pp.~219--230.

\end{thebibliography}
\end{document}